\def\person{paule} % or coauthor. Then change input and bibliography directories.
\def\mylabelonoff{off}
\def\allowdisbrkyesno{yes}
\def\numberingtheoremsectionyesno{no}
\def\numberingequationsectionyesno{no}
\def\pagesizeextendednormal{extended}
\def\reportudemathyesno{no}
\def\reportudemathnumber{SM-UDE-786}
\def\reportudemathyear{2015}
\def\reportudematheingang{\mydate}
\def\mytitle{On Korn's First Inequality\\ for Tangential or Normal Boundary Conditions with Explicit Constants}
\def\myshorttitle{On Korn's First Inequality}
\def\myauthorone{Sebastian Bauer}
\def\myauthortwo{Dirk Pauly}
\def\myauthors{\myauthorone\quad\&\quad\myauthortwo}
\def\myaddressone{Fakult\"at f\"ur Mathematik,
Universit\"at Duisburg-Essen, Campus Essen, Germany}
\def\myemailone{sebastian.bauer.seuberlich@uni-due.de}
\def\myemailtwo{dirk.pauly@uni-due.de}
\def\mykeywords{Korn inequality, tangential and normal boundary conditions, Boltzmann equation}
\def\mysubjclass{49J40 / 82C40 / 76P05}
\def\mydate{\today}
\newcommand{\mylabel}[1]{\label{#1}\fbox{{\sf #1}}}}
\newcommand{\mylabel}[1]{\label{#1}}}
\numberwithin{equation}{section}}
\newcommand{\diss}{\displaystyle}
\newcommand{\ovl}[1]{\overline{#1}}
\newcommand{\qtext}[1]{\quad\text{#1}\quad}
\newcommand{\cp}{c_{\mathsf{p}}}
\newcommand{\set}[2]{\{#1\,:\,#2\}}
\newcommand{\setb}[2]{\big\{#1\,:\,#2\big\}}
\newtheorem{lem}{Lemma}[section]}
\newtheorem{lem}{Lemma}}
\newtheorem{defi}[lem]{Definition}
\newtheorem{theo}[lem]{Theorem}
\newtheorem{cor}[lem]{Corollary}
\newtheorem{rem}[lem]{Remark}
\newtheorem{pro}[lem]{Proposition}
\newtheorem{ex}[lem]{Example}
\newcommand{\om}{\Omega}
\newcommand{\omb}{\ovl{\om}}
\newcommand{\ga}{\Gamma}
\newcommand{\gat}{\ga_{\mathsf{t}}}
\newcommand{\gan}{\ga_{\mathsf{n}}}
\newcommand{\eps}{\epsilon}
\newcommand{\calR}{\mathcal{R}}
\newcommand{\reals}{\mathbb{R}}
\newcommand{\n}{\mathbb{N}}
\newcommand{\rtwo}{\reals^{2}}
\newcommand{\rt}{\reals^{3}}
\newcommand{\rN}{\reals^{N}}
\newcommand{\rNtN}{\reals^{N\times N}}
\newcommand{\foh}{\frac{1}{2}}
\newcommand{\oh}{\nicefrac{1}{2}}
\newcommand{\moh}{-\oh}
\newcommand{\equi}{\Leftrightarrow}
\newcommand{\qequi}{\quad\equi\quad}
\DeclareMathOperator{\id}{id}
\DeclareMathOperator{\sym}{sym}
\DeclareMathOperator{\skw}{skw}
\DeclareMathOperator{\dev}{dev}
\DeclareMathOperator{\tr}{tr}
\newcommand{\tcomp}[1]{#1_{\mathsf{t}}}
\newcommand{\ncomp}[1]{#1_{\mathsf{n}}}
\DeclareMathOperator{\p}{\partial}
\DeclareMathOperator{\na}{\nabla}
\DeclareMathOperator{\rot}{rot}
\DeclareMathOperator{\divergence}{div}
\renewcommand{\div}{\divergence}
\DeclareMathOperator{\ed}{d}
\newcommand{\csymbol}{\mathsf{C}}
\newcommand{\cgen}[3]{\overset{#1}{\csymbol}{}^{#2}_{#3}}
\newcommand{\ci}{\cgen{}{\infty}{}}
\newcommand{\cic}{\cgen{\circ}{\infty}{}}
\newcommand{\cict}{\cgen{\circ}{\infty}{\mathsf{t}}}
\newcommand{\cicn}{\cgen{\circ}{\infty}{\mathsf{n}}}
\newcommand{\cictn}{\cgen{\circ}{\infty}{\mathsf{t,n}}}
\newcommand{\co}{\cgen{}{1}{}}
\newcommand{\coo}{\cgen{}{1,1}{}}
\newcommand{\ct}{\cgen{}{2}{}}
\newcommand{\cicrN}{\cic(\rN)}
\newcommand{\cicom}{\cic(\om)}
\newcommand{\cictom}{\cict(\om)}
\newcommand{\cicnom}{\cicn(\om)}
\newcommand{\cictnom}{\cictn(\om)}
\newcommand{\cicomb}{\cic(\omb)}
\newcommand{\lsymbol}{\mathsf{L}}
\newcommand{\lgen}[3]{\overset{#1}{\lsymbol}{}^{#2}_{#3}}
\newcommand{\lt}{\lgen{}{2}{}}
\newcommand{\ltmo}{\lgen{}{2}{-1}}
\newcommand{\ltloc}{\lgen{}{2}{\mathsf{loc}}}
\newcommand{\ltom}{\lt(\om)}
\newcommand{\ltmoom}{\ltmo(\om)}
\newcommand{\ltlocom}{\ltloc(\om)}
\newcommand{\hsymbol}{\mathsf{H}}
\newcommand{\hgen}[3]{\overset{#1}{\hsymbol}{}^{#2}_{#3}}
\newcommand{\ho}{\hgen{}{1}{}}
\newcommand{\htwo}{\hgen{}{2}{}}
\newcommand{\hoc}{\hgen{\circ}{1}{}}
\newcommand{\hoct}{\hgen{\circ}{1}{\mathsf{t}}}
\newcommand{\hocn}{\hgen{\circ}{1}{\mathsf{n}}}
\newcommand{\hoctn}{\hgen{\circ}{1}{\mathsf{t,n}}}
\newcommand{\homo}{\hgen{}{1}{-1}}
\newcommand{\homoc}{\hgen{\circ}{1}{-1}}
\newcommand{\homoctn}{\hgen{\circ}{1}{-1,\mathsf{t,n}}}
\newcommand{\hoom}{\ho(\om)}
\newcommand{\htom}{\htwo(\om)}
\newcommand{\hocom}{\hoc(\om)}
\newcommand{\hoctom}{\hoct(\om)}
\newcommand{\hocnom}{\hocn(\om)}
\newcommand{\hoctnom}{\hoctn(\om)}
\newcommand{\homoctnom}{\homoctn(\om)}
\newcommand{\homoom}{\homo(\om)}
\newcommand{\homocom}{\homoc(\om)}
\newcommand{\norm}[1]{|#1|}
\newcommand{\normltom}[1]{\norm{#1}_{\ltom}}
\newcommand{\normltmoom}[1]{\norm{#1}_{\ltmoom}}
\newcommand{\scp}[2]{\langle#1,#2\rangle}
\newcommand{\scpltom}[2]{\scp{#1}{#2}_{\ltom}}
\newcommand{\bthreemat}[9]{\begin{bmatrix}#1&#2&#3\\#4&#5&#6\\#7&#8&#9\end{bmatrix}}
\newcommand{\preprintudemath}[5]{
\thispagestyle{empty}
\Large
\begin{center}SCHRIFTENREIHE DER FAKULT\"AT F\"UR MATHEMATIK\end{center}
\vspace*{5mm}
\begin{center}#1\end{center}
\vspace*{5mm}
\begin{center}by\end{center}
\begin{center}#2\end{center}
\vspace*{5mm}
\begin{center}#3\hspace{80mm}#4\end{center}
\newpage
\thispagestyle{empty}
\vspace*{210mm}
Received: #5
\newpage
\addtocounter{page}{-2}
\normalsize}
\title[\sc\myshorttitle]{\Large\sf\mytitle}
\author{\myauthorone}
\author{\myauthortwo}
\address{\myaddressone}
\email[\myauthorone]{\myemailone}
\email[\myauthortwo]{\myemailtwo}
\keywords{\mykeywords}
\subjclass{\mysubjclass}
\date{\mydate}
\begin{document}
%%%%%%%%%%%%%%%%%%%%%%%%%%%%%%%%%%%%%%%%%%%%%%%%%%%%%%%%%%%%%%%%%%%%%%%%%%%%%%%%

%%%%%%%%%%%%%%%%%%%%%%%%%%%%%%%%
% report series Duisburg-Essen %
%%%%%%%%%%%%%%%%%%%%%%%%%%%%%%%%

\ifthenelse{\equal{\reportudemathyesno}{yes}}
{\preprintudemath{\mytitle}{\myauthors}{\reportudemathnumber}{\reportudemathyear}{\reportudematheingang}}
{}

%%%%%%%%%%%%%%%%%%%%%%%%%%%%%%%%%%%%%%%%%%%%%%%%%%%%%%%%%%%%%%%%%%%%%%%%%%%%%%%%

\begin{abstract}
We will prove that for piecewise $\ct$-concave domains in $\rN$
Korn's first inequality holds for vector fields satisfying
homogeneous normal or tangential boundary conditions
with explicit Korn constant $\sqrt{2}$.
\end{abstract}

\maketitle
\tableofcontents

%%%%%%%%%%%%%%%%%%%%%%%%%%%%%%%%%%%%%%%%%%%%%%%%%%%%%%%%%%%%%%%%%%%%%%%%%%%%%%%%

\section{Introduction}

In \cite{desvillettesvillanikornnormal}, Desvillettes and Villani proved
a non-standard version of Korn's first inequality 
\begin{align}
\mylabel{firstkornnormal}
\normltom{\na v}
\leq c_{\mathsf{k,n}}\normltom{\sym\na v}
\end{align}
on non-axisymmetric sufficiently smooth bounded domains in $\rN$ for vector fields being tangential at the boundary.
Here $c_{\mathsf{k,n}}>0$ denotes the best available constant and the indices 
$\mathsf{k}$, $\mathsf{n}$ refer to 'Korn' and 'homogenous normal boundary condition'.
As pointed out in \cite{desvillettesvillanitrendglequiboltzmann},
this Korn inequality has an important application 
in statistical physics, more precisely in the study of relaxation to equilibrium of rarefied gases 
modeled by Boltzmann's equation.

In the paper at hand, we will show that for piecewise $\ct$-domains\footnote{Throughout this contribution,
$\ct$ can always be replaced by $\coo$.}
in $\rN$ with concave or even polyhedral boundary parts (see Definition \ref{admdom})
Korn's first inequality holds for vector fields satisfying (possibly mixed)
homogeneous normal or homogenous tangential boundary conditions, 
see \eqref{CtCn_def} and \eqref{Ctn_def} for a definition of the relevant spaces.
In every case the Korn constant can be estimated by $\sqrt{2}$: 
$$\normltom{\na v}
\leq\sqrt{2}\normltom{\sym\na v}\,,$$
see Theorem \ref{maintheo} for a precise statement.
The proof of our main theorem consists of a simple combination of two pointwise
equalities of the gradient of a vector field, see \eqref{graddevsymskw} and \eqref{rotgrad}, 
and an integration by parts formula
derived e.g. by Grisvard in \cite[Theorem 3.1.1.2]{grisvardbook}, see Proposition  \ref{GrisvardPro}. 
But before going into details of the proof we shall discuss
some disturbing consequences of  Theorem \ref{maintheo} seriously questioning 
at least the physical justification of full normal boundary conditions.

It is well known that Korn's first inequality with full normal boundary condition 
does not hold if $\om$ is axisymmetric.
We illustrate this fact with a simple example:
Let $\om\subset\rt$ be a bounded body of rotation with axis of symmetry $x_{1}=x_{2}=0$, e.g.,
$\om$ could be a ball, a cylinder or a cone.
Then the vector field $v$ defined by $v(x):=(x_{2},-x_{1},0)^\top$ belongs to $\hoom$ 
and is tangential to $\partial\om$. Hence, $v\in\hocnom$ (for a precise definition of $\hocnom$ see \eqref{CtCn_def}) and
$$\na v=\bthreemat{0}{-1}{0}{1}{0}{0}{0}{0}{0},\quad
\sym\na v=0,\quad
\div v=0.$$
Thus, Korn's first inequality with full normal boundary condition, see \eqref{firstkornnormal},
fails for these special domains $\om$, i.e., $c_{\mathsf{k,n}}=c_{\mathsf{k,n}}(\om)=\infty$.
On the other hand, Theorem \ref{maintheo} applies 
for every polyhedral approximation $\om_{\mathsf{p}}$ of $\om$ and we have
$$\forall\,v\in\hocn(\om_{\mathsf{p}})\qquad
\norm{\na v}_{\lt(\om_{\mathsf{p}})}
\leq\sqrt{2}\norm{\sym\na v}_{\lt(\om_{\mathsf{p}})}.$$
This means the (first) Korn constant can jump from $\sqrt{2}$ to $\infty$
caused by an arbitrary small deformation of the domain.
Many numerical schemes work on polyhedral domains of computation.
The Korn constants of all these domains are bounded from above by $\sqrt{2}$. 
But in many applications the domain of computation is just an approximation of
some `real' domain, whose Korn constant could be much larger.

Furthermore we shall discuss some conjectures on the meaning of Korn's first constant made in
\cite{desvillettesvillanikornnormal} and \cite{desvillettesvillanitrendglequiboltzmann}.
In \cite{desvillettesvillanikornnormal} Korn's first inequality with normal boundary condition, 
i.e. \eqref{firstkornnormal}, is proved 
for a bounded $\co$-domain $\om\subset\rN$ which is not axisymmetric.
An upper bound for the first Korn constant is presented by\footnote{In \cite{desvillettesvillanikornnormal}
the notations are different. For the constants we have
$c_{\mathsf{k,n}}^{-2}=K(\om)$, $c_{\mathsf{k}}^{-2}=\ovl{K}(\om)$,
$c_{\mathsf{m,n}}^2=C_{H}(\om)$ and $c_{\mathsf{g}}=G(\om)$.}
\begin{align}
\mylabel{kornconstrelation}
c_{\mathsf{k,n}}^2
&\leq2N(1+c_{\mathsf{m,n}}^2)(1+c_{\mathsf{k}}^2)(1+c_{\mathsf{g}}^{-1}),
\end{align}
where $c_{\mathsf{k}}$ denotes the first Korn constant 
for vector fields in $\hoom$ without boundary conditions,
$c_{\mathsf{m,n}}$ a special Gaffney constant 
for tangential vector fields in $\hoom$
and $c_{\mathsf{g}}$ the so called Grad's number defined by
$$c_{\mathsf{g}}
:=\frac{1}{|\om|}\inf_{|\sigma|=1}\inf_{v_{\sigma}\in\mathsf{V}_{\mathsf{n},\sigma}(\om)}\normltom{\sym\na v_{\sigma}}^2$$
with the finite dimensional set
$$\mathsf{V}_{\mathsf{n},\sigma}(\om)
:=\set{v\in\hocnom}{\div v=0\,\wedge\,\rot v=\sigma},\quad
\sigma\in S^{(N-1)N/2-1}.$$
For a precise definition of and more comments on these constants, see Section 4.

It is now conjectured in 
\cite[pages 607f]{desvillettesvillanikornnormal} and \cite[pages 285, 306 and (48)]{desvillettesvillanitrendglequiboltzmann}
that the constant $c_{\mathsf{k,n}}$ quantifies the deviation of 
$\om\subset\rN$ from being axisymmetric in the sense that $c_{\mathsf{k,n}}=c_{\mathsf{k,n}}(\om)$ tends to infinity,
if $\om$ is approaching axial symmetry. For such a statement it would be necessary to bound $c_{\mathsf{k,n}}$
from below while in  \cite{desvillettesvillanikornnormal} only the bound \eqref{kornconstrelation} from above is proved.
However,  Theorem \ref{maintheo} clearly shows that this 
conjecture becomes false at least if polyhedra are allowed to compete.
In \cite{desvillettesvillanikornnormal} and
\cite[page 609]{desvillettesvillanitrendglequiboltzmann}
it is also conjectured that it is Grad's number $c_{\mathsf{g}}=c_{\mathsf{g}}(\om)$
steering this blow-up of the Korn constant. 
It is conjectured that for smooth domains
Grad's number tends to zero while the domain $\om$ is approaching axial symmetry.
The following is actually stated in \cite[Proposition 5]{desvillettesvillanikornnormal}: 
$c_{\mathsf{g}}(\om)=0$ if and only if $\om$ is axisymmetric.
Moreover, there is a lower bound on $c_{\mathsf{g}}(\om)$ which
depends on the shape of $\om$. 
But in order to prove the conjecture it would 
be necessary to give an upper bound on $c_{\mathsf{g}}(\om)$ tending to zero if the domain is approaching axial sysmmetry.
However, this conjecture gets wrong, too,  if we allow for polyhedra: 
In Section 4 we will show that 
$$c_{\mathsf{g}}(\om_{\mathsf{p}})= \foh$$
holds for every convex bounded polyhedron $\om_{\mathsf{p}}$. 
Therefore, for any sequence of bounded and convex polyhedra tending to any axisymmetric domain Grad's number equals $\oh$.

The remaining part of the paper is organized as follows:
In Section 2 we give the relevant definitions on the spaces and domains used and
establish some equalities and inequalities used in the sequel.
In Section 3 we state our main theorem in detail and give the proof.
In Section 4 we discuss the constants $c_{\mathsf{k,n}}$, $c_{\mathsf{k}}$, $c_{\mathsf{m, n}}$ and $c_{\mathsf{g}}$
and give some further comments on the regularity of the boundary needed in the proof of 
\eqref{firstkornnormal} in \cite{desvillettesvillanikornnormal}.
In the last section we provide some more results estimating the gradient of a vector field. 

\section{Preliminaries}
\mylabel{sec:prelim}

Let $\om$ be an open subset of $\rN$
with $2\leq N\in\n$ and boundary $\ga:=\p\om$.
We introduce the standard scalar valued Lebesgue and Sobolev spaces by $\ltom$ and $\hoom$, respectively. 
Moreover, we define $\hocom$ as closure  in $\hoom$ 
of smooth and compactly supported test functions $\cicom$.
These definitions extend component-wise to vector or matrix fields
and we will use the same notations for these spaces throughout the paper. 
Moreover, we will consistently denote functions by $u$ and vector fields by $v$.
If $\om$ is Lipschitz, we define the vector valued Sobolev space $\hoctom$ resp. $\hocnom$ 
as closure in $\hoom$ of the set of test vector fields 
\begin{align}
\mylabel{CtCn_def}
\cictom
:=\setb{v\in\cicomb}{\tcomp{v}=0},\quad
\cicnom
:=\setb{v\in\cicomb}{\ncomp{v}=0},
\end{align}
respectively, generalizing homogeneous tangential resp. normal boundary conditions. 
Here, $\nu$ denotes the a.e. defined outer unit normal at $\ga$ giving a.e. 
the tangential resp. normal component
$$\tcomp{v}:=v|_{\ga}-\ncomp{v}\nu,\quad
\ncomp{v}:=\nu\cdot v|_{\ga}$$
of $v$ on $\ga$.
Here, we denote as usual 
$$\cicomb:=\setb{v|_{\om}}{v\in\cicrN}.$$
For smooth functions or vector fields $ v$ in $\hoom$ we have\footnote{The cross-product
notation needs an explanation. If we identify 
vector fields $a,b$ in $\rN$ with $1$-forms $\alpha,\beta$ we have for $|\beta|=1$ the identity
$\alpha=\beta\wedge*\beta\wedge*\alpha+(-1)^{N}*\beta\wedge*\beta\wedge\alpha$, 
where the wedge and Hodge star operations are executed from right to left. 
Especially in $\rt$ we have
$*\beta\wedge\alpha\cong b\times a$, $*\beta\wedge*\beta\wedge\alpha\cong b\times b\times a$ 
and $*\beta\wedge*\alpha\cong b\cdot a$. Hence,
$a=(b\cdot a)b-b\times b\times a$. For $b:=\nu$ and $a:=v|_{\ga}$
we get $\tcomp{v}=v|_{\ga}-\ncomp{v}\nu=-\nu\times\nu\times v|_{\ga}$ 
and we see $\tcomp{v}=0$ if and only if $\nu\times v|_{\ga}=0$.
Now, in this spirit the cross product in $\rN$ for vector fields is generally defined by 
$b\times a:\cong*\beta\wedge\alpha$, the latter being a $(N-2)$-form.
This yields e.g. $b\times a=b_{1}a_{2}-b_{2}a_{1}$ in $\rtwo$
or generally $b\times a\in\reals^{(N-1)N/2}$ in $\rN$.}
$$v\in\hocom\qequi v|_{\ga}=0,\qquad
v\in\hoctom\qequi\nu\times v|_{\ga}=0,\qquad
v\in\hocnom\qequi\nu\cdot v|_{\ga}=0.$$
If $\ga$ is decomposed into two relatively open subsets
$\gat$ and $\gan:=\ga\setminus\ovl{\gat}$ we define 
the vector valued $\ho$-Sobolev space of mixed boundary conditions $\hoctnom$
as closure in $\hoom$ of the set of test vector fields
\begin{align}
\mylabel{Ctn_def}
\cictnom
:=\setb{v\in\cicomb}{\tcomp{v}|_{\gat}=0\,\wedge\,\ncomp{v}|_{\gan}=0},
\end{align}
generalizing
$\nu\times v|_{\gat}=0$ and $\nu\cdot v|_{\gan}=0$
for $v\in\hoctnom$, respectively. 
For matrices $A\in\rNtN$ we recall the notations
\begin{align*}
\sym A
&:=\foh(A+A^{\top}),&
\skw A
&:=\foh(A-A^{\top}),&
\dev A
&:=A-\id_{A},&
\id_{A}
&:=\frac{\tr A}{N} \id
\end{align*}
with $\tr A:=A\cdot\id$ using the pointwise scalar product. 
By pointwise orthogonality we have
$$\norm{A}^2
=\norm{\dev A}^2
+\frac{1}{N}\norm{\tr A}^2,\quad
\norm{A}^2
=\norm{\sym A}^2
+\norm{\skw A}^2,\quad
\norm{\sym A}^2
=\norm{\dev\sym A}^2
+\frac{1}{N}\norm{\tr A}^2$$
and hence $\norm{\dev A},N^{\moh}\norm{\tr A},\norm{\sym A},\norm{\skw A}\leq\norm{A}$.
Especially for $A:=\na v:=J_{v}^{\top}$, where $J_{v}$ denotes the Jacobian of $v\in\hoom$, 
we see pointwise a.e.\footnote{The $\rot$-operator can be defined as follows:
We identify smooth vector fields $a$ in $\rN$ with smooth $1$-forms $\alpha$.
Then $\rot a:\cong\ed\alpha$, where $\ed$ denotes the exterior derivative
and $\ed\alpha$ is a $2$-form. For $N=2$ we obtain the scalar valued rotation 
$\rot a=\p_{1}a_{2}-\p_{2}a_{1}$ and for $N=3$ the classical rotation $\rot a$
appears, whereas generally $\rot a(x)\in\reals^{(N-1)N/2}$ holds.}
\begin{align}
\nonumber
\norm{\skw\na v}^2
&=\foh\norm{\rot v}^2,\quad
\tr\na v=\div v
\intertext{and}
\mylabel{graddevsymskw}
\norm{\na v}^2
&=\norm{\dev\sym\na v}^2
+\frac{1}{N}\norm{\div v}^2
+\foh\norm{\rot v}^2.
\intertext{Moreover, we have}
\mylabel{rotgrad}
\norm{\na v}^2
&=\norm{\rot v}^2
+\scp{\na v}{(\na v)^{\top}},
\intertext{since}
\nonumber
2\norm{\skw\na v}^2
&=\foh\norm{\na v-(\na v)^{\top}}^2
=\norm{\na v}^2
-\scp{\na v}{(\na v)^{\top}}.
\end{align}

The simplest version of Korn's first inequality is the following.

\begin{lem}[Korn's first inequality: $\hoc$-version]
\mylabel{kornfirst}
For all $v\in\hocom$
$$\normltom{\na v}^2
=2\normltom{\dev\sym\na v}^2
+\frac{2-N}{N}\normltom{\div v}^2
\leq2\normltom{\dev\sym\na v}^2$$
and equality holds if and only if $\div v=0$ or $N=2$.
\end{lem}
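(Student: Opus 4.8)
The plan is to reduce everything to the pointwise identity \eqref{graddevsymskw} together with the classical fact that for compactly supported (or, by density, $\hocom$) vector fields the $\rot$ and $\div$ norms are balanced against the full gradient norm. Concretely, the key auxiliary identity is that for $v\in\cicrN$ one has, after an integration by parts on $\rN$,
$$\normltom{\na v}^2
=\normltom{\div v}^2
+\foh\normltom{\rot v}^2,$$
which follows by expanding $\scp{\na v}{(\na v)^{\top}}$ as in the displayed computation leading to \eqref{rotgrad}, integrating by parts twice in $\cicrN$ (the boundary terms vanish), and recognizing the resulting bilinear form as $\|\div v\|^2$. By density this extends to all $v\in\hocom$.

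Then I would substitute this balance law into the pointwise identity \eqref{graddevsymskw}. Integrating \eqref{graddevsymskw} over $\om$ gives
$$\normltom{\na v}^2
=\normltom{\dev\sym\na v}^2
+\frac{1}{N}\normltom{\div v}^2
+\foh\normltom{\rot v}^2,$$
and replacing $\foh\normltom{\rot v}^2$ by $\normltom{\na v}^2-\normltom{\div v}^2$ yields
$$\normltom{\na v}^2
=\normltom{\dev\sym\na v}^2
+\frac{1}{N}\normltom{\div v}^2
+\normltom{\na v}^2
-\normltom{\div v}^2,$$
so the $\normltom{\na v}^2$ terms cancel and one is left with
$$0
=\normltom{\dev\sym\na v}^2
+\Bigl(\frac{1}{N}-1\Bigr)\normltom{\div v}^2,$$
i.e. $\frac{N-1}{N}\normltom{\div v}^2=\normltom{\dev\sym\na v}^2$. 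Feeding this back (for instance into $\foh\normltom{\rot v}^2=\normltom{\na v}^2-\normltom{\div v}^2$ together with the integrated \eqref{graddevsymskw}) gives
$$\normltom{\na v}^2
=2\normltom{\dev\sym\na v}^2
+\frac{2-N}{N}\normltom{\div v}^2,$$
which is the claimed equality. The inequality is then immediate since $\frac{2-N}{N}\leq 0$ for $N\geq2$, with equality precisely when $\div v=0$ or $N=2$ (when the coefficient $\frac{2-N}{N}$ itself vanishes).

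The only genuine point requiring care is the integration-by-parts step establishing the $\rot$–$\div$ balance on $\hocom$: one must be sure the boundary terms vanish, which is exactly what the $\hocom$ (full Dirichlet) condition guarantees, and one must justify passing from $\cicom$ to $\hocom$ by density and continuity of all the relevant $\lt$-norms. Everything else is pointwise linear algebra from the Preliminaries plus one linear combination of two integrated identities; no estimates of constants and no geometry of $\ga$ enter here.
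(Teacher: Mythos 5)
Your overall strategy is the same as the paper's: combine the integrated pointwise identity \eqref{graddevsymskw} with a Gaffney-type identity for compactly supported fields. However, your auxiliary identity carries a spurious factor $\foh$. Integrating by parts as you yourself describe (via \eqref{rotgrad} and $\scpltom{\na v}{(\na v)^{\top}}=\normltom{\div v}^2$ for $v\in\cicom$) yields
$$\normltom{\na v}^2=\normltom{\rot v}^2+\normltom{\div v}^2,$$
which is \eqref{hocnarotdic}, and not $\normltom{\na v}^2=\normltom{\div v}^2+\foh\normltom{\rot v}^2$. This is not a harmless slip, because all of your subsequent algebra uses the wrong version: substituting $\foh\normltom{\rot v}^2=\normltom{\na v}^2-\normltom{\div v}^2$ into the integrated \eqref{graddevsymskw} makes the gradient terms cancel completely and leaves the relation
$$\frac{N-1}{N}\normltom{\div v}^2=\normltom{\dev\sym\na v}^2,$$
which is false for general $v\in\hocom$: any nonzero divergence-free $v\in\cicom$ would then satisfy $\dev\sym\na v=\sym\na v=0$ and hence, by the very inequality being proved, $v=0$. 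The subsequent ``feeding back'' step cannot recover the Korn identity from this false relation; indeed, combining your two displayed conclusions would force $\normltom{\na v}^2=\normltom{\div v}^2$, i.e.\ $\rot v=0$ for every $v\in\hocom$.

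With the correct Gaffney identity the argument closes exactly as in the paper: substituting $\foh\normltom{\rot v}^2=\foh\normltom{\na v}^2-\foh\normltom{\div v}^2$ into the integrated \eqref{graddevsymskw} gives
$$\normltom{\na v}^2=\normltom{\dev\sym\na v}^2+\foh\normltom{\na v}^2+\frac{2-N}{2N}\normltom{\div v}^2,$$
so only half of the gradient term cancels, and multiplying by $2$ yields the asserted identity. The inequality and the characterization of equality then follow from $\frac{2-N}{N}\le0$ for $N\ge2$, with the coefficient vanishing exactly when $N=2$, as you correctly state. Your remarks on the vanishing of boundary terms for compactly supported fields and on passing from $\cicom$ to $\hocom$ by density are fine.
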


Although the proof is very simple, we present it here,
since we will use the underlying idea later.

\begin{proof}
For all vector fields $v\in\cicom$ we have 
by\footnote{For smooth $1$-forms in $\rN$ we have
$-\Delta\alpha=\ed*\ed*\alpha+(-1)^{N}*\ed*\ed\alpha$.
This means for a corresponding smooth vector proxy in $\rN$ that
$-\Delta a=-\na\div a+\rot^{*}\rot a$, where $\rot^{*}\cong(-1)^{N}*\ed*$,
the latter mapping $2$-forms to $1$-forms,
denotes the (formal) adjoint of $\rot\cong\ed$. Hence, $\rot^{*}$
maps smooth vector fields in $\reals^{(N-1)N/2}$ to vector fields in $\rN$.
Especially in $\rt$ we have
$\rot^{*}=\rot$ and hence $-\Delta a=-\na\div a+\rot\rot a$.
In $\rtwo$ it holds $\rot^{*}=R\na$, where $R$ is the $90^{\circ}$-rotation matrix,
and hence $-\Delta a=-\na\div a+R\na\rot a$.} 
$-\Delta=\rot^{*}\rot-\na\div$
Gaffney's equality
\begin{align}
\mylabel{hocnarotdic}
\normltom{\na v}^2
&=\normltom{\rot v}^2+\normltom{\div v}^2,
\intertext{which extends to all $v\in\hocom$ by continuity. Hence, with \eqref{graddevsymskw}}
\mylabel{hocnasymskw}
\normltom{\na v}^2
&=\normltom{\dev\sym\na v}^2
+\foh\normltom{\na v}^2
+\frac{2-N}{2N}\normltom{\div v}^2
\end{align}
and the assertion follows immediately.
\end{proof}

Recalling that we work with exterior unit normals at the boundaries,
we now introduce our admissible domains. 

\begin{defi}
\mylabel{admdom}
We call $\om$ `piecewise $\ct$', if
\begin{itemize}
\item[(i)] 
$\ga$ is strongly Lipschitz, i.e., locally a graph of a Lipschitz function,
\item[(ii)] 
$\ga=\ga_0\cup\ga_1$, where $\ga_0$ has $(N-1)$-dimensional Lebesgue measure zero, 
$\ga_1$ is relatively open in $\ga$ and locally a graph of a $\ct$-function. 
\end{itemize}
We call $\om$ `piecewise $\ct$-convex' resp. `piecewise $\ct$-concave', if $\om$ is piecewise $\ct$ and
\begin{itemize}
\item[(iii)]  
the second fundamental form on $\ga_1$ induced by $\na\nu$ is positive resp. negative semi-definite.
\end{itemize}
\end{defi}

By assumptions the exterior unit normal $\nu$ can be extended into a neighborhood
of $\ga_{1}$ such that the second fundamental form, i.e., the gradient $\na\nu$,
and its trace $\tr\na\nu=\div\nu=2H$, where $H$ denotes the mean curvature,
are well defined. For precise definitions see e.g. \cite[Section 3.1.1]{grisvardbook}.

\begin{ex}
The following domains in $\rtwo$ are piecewise $\ct$-concave,
where the dotted lines indicate an exterior domain:
\def\scalepic{0.42}
\begin{center}
%\includegraphics[scale=\scalepic]{pic-ext-ball}
%\includegraphics[scale=\scalepic]{pic-ext-poly-1}
%\includegraphics[scale=\scalepic]{pic-pac-man}
%\includegraphics[scale=\scalepic]{pic-squares}
%\includegraphics[scale=\scalepic]{pic-star-concave}
%\includegraphics[scale=\scalepic]{pic-star-holes}
%\includegraphics[scale=\scalepic]{pic-star}
%%%%%%%%%%%%%%%%%%%%%%%%%%%%%%%%%%%%%%%%%%%%%%%%%%%%%%%%
% pics by Manu
%%%%%%%%%%%%%%%%%%%%%%%%%%%%%%%%%%%%%%%%%%%%%%%%%%%%%%%%
%========================================
% exterior domain: sphere
%========================================
\mbox{
\begin{tikzpicture}[scale=\scalepic]
\fill [gray,opacity=.5] (-2,-2) rectangle (2,2);				% space
\draw [dotted,line width=1.5pt] (-2,-2) rectangle (2,2);			% space boundary
\fill [white] (0,0) circle (1);								% circle
\draw [line width=1pt] (0,0) circle (1);						% circle boundary
\end{tikzpicture}
}
%========================================
% exterior domain: rectangular pac-man
%========================================
\mbox{
\begin{tikzpicture}[scale=\scalepic]
\fill [gray,opacity=.5] (-2,-2) rectangle (2,2);						% space
\draw [dotted,line width=1.5pt] (-2,-2) rectangle (2,2);					% space boundary
\fill [white] (-1,-1) -- (-1,1) -- (1,1) -- (0,0) -- (1,-1) -- cycle;				% pac-man
\draw [line width=1pt] (-1,-1) -- (-1,1) -- (1,1) -- (0,0) -- (1,-1) -- cycle;		% pac-man boundary
\end{tikzpicture}
}
%========================================
% rectangle \setminus normal pac-man
%========================================
\mbox{
\begin{tikzpicture}[scale=\scalepic]
\fill [gray,opacity=.5] (-1.7,-1.7) rectangle (1.7,1.7);				% domain fill
\draw [line width=1pt] (-1.7,-1.7) rectangle (1.7,1.7);				% domain outer boundary
\fill [white] (0.7,0.7) arc (44:315:1);							% pac-man fill
\draw [line width=1pt] (0.7,0.7) arc (44:315:1);					% pac-man boundary
\fill [gray,opacity=.5] (0.7,0.7) -- (0,0) -- (0.7,-0.7);				% pac-man mouth fill
\draw [line width=1pt] (0.7,0.7) -- (0,0) -- (0.7,-0.7);				% pac-man mouth boundary
\end{tikzpicture}
}
%========================================
% rectangle \setminus rectangle
%========================================
\mbox{
\begin{tikzpicture}[scale=\scalepic]
\fill [gray,opacity=.5] (-1.7,-1.7) rectangle (1.7,1.7);				% domain fill
\draw [line width=1pt] (-1.7,-1.7) rectangle (1.7,1.7);				% domain outer boundary
\fill [white] (-0.5,-1) rectangle (1.4,0.2);						% rectangle fill
\draw [line width=1pt] (-0.5,-1) rectangle (1.4,0.2);				% pac-man boundary
\end{tikzpicture}
}
%========================================
% random shape, no holes
%========================================
\mbox{
\begin{tikzpicture}[scale=\scalepic]
% domain fill
\fill [gray,opacity=.5] (0,0) -- (2,-1.5) -- (3,-0.2) .. controls (4,0) and (5,-1) .. (5,-2.5) -- (6.5,-1) -- (4.5,0) -- (5,1.5) .. controls (4,1) and (2,0) .. (1,1.7) -- (0.8,0.3) -- cycle;
% domain outer boundary
\draw [line width=1pt] (0,0) -- (2,-1.5) -- (3,-0.2) .. controls (4,0) and (5,-1) .. (5,-2.5) -- (6.5,-1) -- (4.5,0) -- (5,1.5) .. controls (4,1) and (2,0) .. (1,1.7) -- (0.8,0.3) -- cycle;
\end{tikzpicture}
}
%========================================
% random shape, two holes
%========================================
\mbox{
\begin{tikzpicture}[scale=\scalepic]
% domain fill
\fill [gray,opacity=.5] (0,0) .. controls (1,0) and (3,0) .. (2,-2) -- (4.5,-2) -- (4,-1) -- (5,-0.5) -- (4.5,-0.3) .. controls (4.3,0.3) ..(3.8,0.8) .. controls (4,1.5) .. (4,2) .. controls (3,2.2) .. (1,2.1) -- (0.5,0.8) -- (-0.4,1.2) -- cycle;
% domain outer boundary
\draw [line width=1pt] (0,0) .. controls (1,0) and (3,0) .. (2,-2) -- (4.5,-2) -- (4,-1) -- (5,-0.5) -- (4.5,-0.3) .. controls (4.3,0.3) ..(3.8,0.8) .. controls (4,1.5) .. (4,2) .. controls (3,2.2) .. (1,2.1) -- (0.5,0.8) -- (-0.4,1.2) -- cycle;
% \setminus cicle
\fill [white] (2.4,1.2) circle (0.5);
\draw [line width=1pt] (2.4,1.2) circle (0.5);
% \setminus non-convex part
\fill [white] (2.8,-0.3) .. controls (2.6,-0.7) .. (2.8,-1.3) -- (3.1,-0.9) -- (3.5,-1.3) .. controls (3.7,-0.7) .. (3.6,-0.3) -- (3.1,-0.7) -- cycle;
\draw [line width=1pt] (2.8,-0.3) .. controls (2.6,-0.7) .. (2.8,-1.3) -- (3.1,-0.9) -- (3.5,-1.3) .. controls (3.7,-0.7) .. (3.6,-0.3) -- (3.1,-0.7) -- cycle;
\end{tikzpicture}
}
%========================================
% starlike domain
%========================================
\mbox{
\begin{tikzpicture}[scale=\scalepic]
% domain fill
\fill [gray,opacity=.5] (0,0) -- (-1,-1) -- (-2,1) -- (-0.7,1) -- (-1.1,3) -- (-0.5,2.7) -- (-0.2,4) -- (0.2,1.8) -- (0.8,2.6) -- (0.5,1) -- (1.5,1) -- (1,-1) -- cycle;
% domain outer boundary
\draw [line width=1pt] (0,0) -- (-1,-1) -- (-2,1) -- (-0.7,1) -- (-1.1,3) -- (-0.5,2.7) -- (-0.2,4) -- (0.2,1.8) -- (0.8,2.6) -- (0.5,1) -- (1.5,1) -- (1,-1) -- cycle;
\end{tikzpicture}
}
%%%%%%%%%%%%%%%%%%%%%%%%%%%%%%%%%%%%%%%%%%%%%%%%%%%%%%%%
\end{center}
\end{ex}

Our main result is an easy consequence of the pointwise equalities \eqref{graddevsymskw} and \eqref{rotgrad}
and the following crucial proposition from Grisvard, \cite[Theorem 3.1.1.2]{grisvardbook}.

\begin{pro}[integration by parts]
\mylabel{GrisvardPro}
Let $\om$ be piecewise $\ct$. Then for all $v\in\cicomb$
\begin{align*}
\normltom{\div v}^2
-\scpltom{\na v}{(\na v)^{\top}}
&=\int_{\ga_1}\big(\div_\ga(\ncomp{v}\tcomp{v})
-2\tcomp{v}\cdot\na_\ga \ncomp{v}\big)
+\int_{\ga_1}\big(\div\nu\,|v_{\mathsf{n}}|^2
+((\na\nu)\,v_{\mathsf{t}})\cdot v_{\mathsf{t}}\big)
\intertext{and for $v\in\cictnom$}
\normltom{\div v}^2
-\scpltom{\na v}{(\na v)^{\top}}
&=\int_{\ga_1}\big(\div\nu\,|v_{\mathsf{n}}|^2
+((\na\nu)\,v_{\mathsf{t}})\cdot v_{\mathsf{t}}\big).
\end{align*}
\end{pro}

Here, $\div_\ga$ and $\na_\ga$ are the usual surface differential operators on $\ga_1$,
which may be identified with the co-derivative $*\ed*$ on $1$-forms
and the exterior derivative $\ed$ on $0$-forms on $\ga_{1}$, respectively.
Actually in \cite{grisvardbook} it is assumed that $\om$ is bounded.
But since we assume that $v$ has compact support, the asserted formulas
hold for unbounded domains as well. 

\begin{rem}
\mylabel{GrisvardProRem}
We note that in \cite{grisvardbook} Grisvard uses $-\na\nu$ 
to define the second fundamental form, which implies a negative sign for
the curvature term, i.e., the integral
$$-\int_{\ga_1}\big(\div\nu\,|v_{\mathsf{n}}|^2
+((\na\nu)\,v_{\mathsf{t}})\cdot v_{\mathsf{t}}\big)$$
appears in \cite[Theorem 3.1.1.2]{grisvardbook}.
Moreover, by 
$\div_\ga(\ncomp{v}\tcomp{v})
=\ncomp{v}\div_\ga\tcomp{v}+\tcomp{v}\cdot\na_\ga\ncomp{v}$ 
on $\ga_{1}$ we have 
$$\div_\ga(\ncomp{v}\tcomp{v})
-2\tcomp{v}\cdot\na_\ga \ncomp{v}
=\ncomp{v}\div_\ga\tcomp{v}
-\tcomp{v}\cdot\na_\ga \ncomp{v}.$$
\end{rem}

An immediate corollary of Proposition \ref{GrisvardPro} is the following.

\begin{cor}[Gaffney's inequalities]
\mylabel{concavepolyhedroncor}
Let $\om$ be piecewise $\ct$-convex resp. $\ct$-concave and $v\in\hoctnom$. Then
$$\normltom{\na v}^2
\leq\normltom{\rot v}^2+\normltom{\div v}^2
\qtext{resp.}
\normltom{\na v}^2
\geq\normltom{\rot v}^2+\normltom{\div v}^2.$$
If $\om$ is even a polyhedron, equality holds, i.e.,
\begin{equation}
\mylabel{polyhedroncor}
\normltom{\na v}^2
=\normltom{\rot v}^2+\normltom{\div v}^2.
\end{equation}
\end{cor}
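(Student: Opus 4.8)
The plan is to read off everything from the identity \eqref{rotgrad}, which gives pointwise a.e.
$$\norm{\na v}^2-\norm{\rot v}^2=\scp{\na v}{(\na v)^{\top}},$$
hence after integration
$$\normltom{\na v}^2-\normltom{\rot v}^2-\normltom{\div v}^2=\scpltom{\na v}{(\na v)^{\top}}-\normltom{\div v}^2=-\Big(\normltom{\div v}^2-\scpltom{\na v}{(\na v)^{\top}}\Big).$$
So the quantity to control is exactly the left-hand side of Proposition \ref{GrisvardPro}. First I would fix $v\in\cictnom$, apply the second formula of Proposition \ref{GrisvardPro}, and obtain
$$\normltom{\na v}^2-\normltom{\rot v}^2-\normltom{\div v}^2=-\int_{\ga_1}\big(\div\nu\,|v_{\mathsf{n}}|^2+((\na\nu)\,v_{\mathsf{t}})\cdot v_{\mathsf{t}}\big).$$
The integrand is the second fundamental form (plus its trace times $|v_{\mathsf n}|^2$) evaluated on the boundary trace of $v$; here I should remark that the test vector fields $\cictnom$ are precisely the ones with $v_{\mathsf t}|_{\ga_1\cap\gat}=0$ and $v_{\mathsf n}|_{\ga_1\cap\gan}=0$, so on each point of $\ga_1$ at least the offending component is killed, but in fact we do not even need that: the sign of the whole integrand is determined by condition (iii) of Definition \ref{admdom}.

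Next, in the piecewise $\ct$-convex case condition (iii) says $\na\nu$ is positive semi-definite on $\ga_1$; since $v_{\mathsf t}$ is tangential, $((\na\nu)v_{\mathsf t})\cdot v_{\mathsf t}\ge 0$, and $\div\nu=\tr\na\nu\ge 0$ as the trace of a positive semi-definite form, so $|v_{\mathsf n}|^2\div\nu\ge 0$; thus the boundary integral is $\ge 0$ and the right-hand side above is $\le 0$, giving $\normltom{\na v}^2\le\normltom{\rot v}^2+\normltom{\div v}^2$. In the piecewise $\ct$-concave case every sign reverses and we get the opposite inequality. Both inequalities then extend from $v\in\cictnom$ to all $v\in\hoctnom$ by density and continuity of all three terms in the $\hoom$-norm, which is the definition of $\hoctnom$.

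Finally, for a polyhedron the $\ct$-part $\ga_1$ of the boundary is a finite union of flat open faces, on which $\nu$ is locally constant, so $\na\nu=0$ and $\div\nu=0$ on $\ga_1$; hence the boundary integral vanishes identically and we get the equality \eqref{polyhedroncor}. Alternatively one may invoke both the convex and the concave cases at once, since a polyhedron is simultaneously piecewise $\ct$-convex and piecewise $\ct$-concave (the semi-definiteness holds trivially with $\na\nu=0$), and the two inequalities force equality. I do not expect a genuine obstacle here; the only point requiring a little care is the density argument and the observation that $\div\nu$, being the trace of the second fundamental form, inherits its sign — but both are routine given Proposition \ref{GrisvardPro} and Definition \ref{admdom}.
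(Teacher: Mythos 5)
Your argument is correct and is essentially the paper's own proof: combine the pointwise identity \eqref{rotgrad} with the second formula of Proposition \ref{GrisvardPro} for $v\in\cictnom$, read off the sign of the boundary integral from the semi-definiteness of $\na\nu$ (and of its trace $\div\nu$), note it vanishes on the flat faces of a polyhedron, and extend to $\hoctnom$ by density. The only small caveat is your aside that the boundary condition is not needed: it is needed, not for the sign of the curvature integral, but to make the first boundary integral (the one with $\div_\ga$ and $\na_\ga$) disappear, which is exactly why the second formula of Proposition \ref{GrisvardPro} requires $v\in\cictnom$.
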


\begin{proof}
By continuity it is sufficient to consider $ v\in\cictnom$ instead of $v\in\hoctnom$.
Using Proposition \ref{GrisvardPro} together with \eqref{rotgrad} we have 
\begin{align}
\mylabel{rotdivgradnanu}
\normltom{\div v}^2
+\normltom{\rot v}^2
=\normltom{\na v}^2
+\int_{\ga_1}\big(\div\nu\,|v_{\mathsf{n}}|^2
+((\na\nu)\,v_{\mathsf{t}})\cdot v_{\mathsf{t}}\big).
\end{align}
Due to the positive resp. negative semi-definiteness of the second fundamental form, 
the surface integral is non-negative resp. non-positive resp. vanishes.
\end{proof}

\begin{rem}
For $N=3$ formula \eqref{polyhedroncor} has already been proved in \cite[Theorem 4.1]{costabelcoercbilinMax}.
\end{rem}

\begin{rem}
\mylabel{approxrem}
By defining the Sobolev spaces with boundary conditions 
as closures of suitable test vector fields,
we avoid discussions about density or approximation arguments and properties.
We note that we do not claim
$$\hoctnom=\set{v\in\hoom}{\nu\times v|_{\gat}=0\,\wedge\,\nu\cdot v|_{\gan}=0},$$
although this equality seems to be reasonable.
On the other hand, it is known, that at least for polyhedra 
or curved polyhedra\footnote{In our notation, a so called curved polyhedron
has got a piecewise $\ci$-boundary.}
in $\rtwo$ or $\rt$ and either full tangential or full normal boundary condition
$$\hoctom=\set{v\in\hoom}{\nu\times v|_{\ga}=0},\quad
\hocnom=\set{v\in\hoom}{\nu\cdot v|_{\ga}=0}$$
hold, see \cite[Theorem 2.1, Lemma 2.6 ($J=1$)]{costabeldaugenicaisesingularitiesmaxwellinterface}
and for the curved case \cite[Theorem 2.3]{costabeldaugemaxwelllameeigenvaluespolyhedra}
and the corresponding proofs. To the best of the authors knowledge,
there are no proofs (yet) for general Lipschitz domains or mixed boundary conditions
showing these density properties.
We also want to point out that Proposition \ref{GrisvardPro}
and formula \eqref{rotdivgradnanu} (and Remark \ref{GrisvardProRem}) 
for the special case of $\om\subset\rt$
have been used e.g. in \cite[Lemma 2.1, Lemma 2.2]{costabeldaugemaxwelllameeigenvaluespolyhedra}
or \cite[Lemma 2.11]{amrouchebernardidaugegiraultvectorpot} as well.
\end{rem}

\section{Results}

\begin{theo}[Korn's first inequality: tangential/normal version]
\mylabel{maintheo}
Let $\om\subset\rN$ be piecewise $\ct$-concave and $v\in\hoctnom$. Then Korn's first inequality
$$\normltom{\na v}
\leq\sqrt{2}\normltom{\dev\sym\na v}$$
holds. If $\om$ is a polyhedron, even
$$\normltom{\na v}^2
=2\normltom{\dev\sym\na v}^2
+\frac{2-N}{N}\normltom{\div v}^2
\leq2\normltom{\dev\sym\na v}^2$$
is true and equality holds if and only if $\div v=0$ or $N=2$.
\end{theo}

\begin{proof}
We use \eqref{graddevsymskw} in combination with Corollary \ref{concavepolyhedroncor} to see
\begin{align*}
\normltom{\na v}^2
&\leq\normltom{\dev\sym\na v}^2
+\foh\normltom{\na v}^2
+\frac{2-N}{2N}\normltom{\div v}^2,
\end{align*}
which shows the first estimate.
If $\om$ is a polyhedron, we see by Corollary \ref{concavepolyhedroncor}
that equality holds in the latter estimate, which proves the other assertions.
\end{proof}

\begin{rem}[unbounded domains]
\mylabel{maintheorem}
All our results remain true for slightly weaker Sobolev spaces.
In exterior domains, i.e., domains with compact complement,
it is common to work in weighted Sobolev spaces like
\begin{align*}
\homoom
&:=\set{u\in\ltmoom}{\na u\in\ltom},\\
\ltmoom
&:=\set{u\in\ltlocom}{\rho^{-1}u\in\ltom},\quad
\rho:=(1+r^2)^{\oh},\quad
r(x):=|x|.
\end{align*}
If $N=2$ we have to replace $\ltmoom$ and $\homoom$ by $\lgen{}{2}{-1,\ln}(\om)$
and $\hgen{}{1}{-1,\ln}(\om)$, respectively,
where $u$ belongs to $\lgen{}{2}{-1,\ln}(\om)$ if $(\ln(e+r)\rho)^{-1}u\in\ltom$.
The Sobolev spaces generalizing the different boundary conditions
are defined as before as closures in $\homoom$ 
resp. $\hgen{}{1}{-1,\ln}(\om)$ of respective test functions.
For bounded domains, these weighted Sobolev spaces coincide with the standard ones
equipped with equivalent scalar products.
The reason for working in weighted Sobolev spaces 
is that the standard Poincar\'e inequalities do not hold in exterior domains.
As proper replacement we have weighted Poincar\'e inequalities, i.e., for $N\geq3$
$$\forall\,u\in\homoom\quad
\normltmoom{u}
\leq\cp\normltom{\na u},$$
and we note that the best Poincar\'e constant for $\homocom$ satisfies $\cp\leq2/(N-2)$,
see e.g. \cite[Poincar\'e's estimate III, p. 57]{leisbook},
\cite[Lemma 4.1]{saranenwitschexteriorell} or \cite[Appendix A.2]{paulyrepinell}.
Since our arguments only involve derivatives, it is clear that all our results,
mainly Theorem \ref{maintheo} but also the preceding lemmas and corollary,
extend easily to the family of Sobolev spaces in $\homoom$ resp. $\hgen{}{1}{-1,\ln}(\om)$.
\end{rem}

From the latter remark the following clearly holds true.

\begin{cor}[Korn's first inequality: weighted tangential/normal version]
\mylabel{maintheocor}
Theorem \ref{maintheo} extends to all $v$ in $\homoctnom$ 
resp. $\hgen{\circ}{1}{-1,\ln,\mathsf{t,n}}(\om)$ if $N=2$.
\end{cor}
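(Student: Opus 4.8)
The plan is essentially to observe that Corollary~\ref{maintheocor} requires no new mathematical content beyond Remark~\ref{maintheorem}: the proof of Theorem~\ref{maintheo} is purely differential, and so is the proof of every result it rests on. First I would recall that the weighted spaces $\homoctnom$ and $\hgen{\circ}{1}{-1,\ln,\mathsf{t,n}}(\om)$ are defined, exactly as in the bounded case, as closures of the test vector fields $\cictnom$ from \eqref{Ctn_def}, only now taken in the norm of $\homoom$ (or of $\hgen{}{1}{-1,\ln}(\om)$ when $N=2$) rather than in the $\hoom$-norm. In particular these spaces still contain $\cictnom$ as a dense subset, and on $\cictnom$ the $\hoom$-seminorm $\normltom{\na v}$ is finite and is precisely the quantity estimated in Theorem~\ref{maintheo}.

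Next I would carry out the argument on test fields. For $v\in\cictnom$ the pointwise identity \eqref{graddevsymskw} holds a.e.\ and integrates to
\begin{align*}
\normltom{\na v}^2
&=\normltom{\dev\sym\na v}^2
+\tfrac{1}{N}\normltom{\div v}^2
+\tfrac{1}{2}\normltom{\rot v}^2,
\end{align*}
while Corollary~\ref{concavepolyhedroncor}, whose proof only uses Proposition~\ref{GrisvardPro} and the pointwise identity \eqref{rotgrad} (again both valid for compactly supported $v$ on any piecewise $\ct$ domain, bounded or not, as noted after Proposition~\ref{GrisvardPro}), gives $\normltom{\rot v}^2+\normltom{\div v}^2\le\normltom{\na v}^2$ in the concave case, with equality for polyhedra. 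Combining these exactly as in the proof of Theorem~\ref{maintheo} yields $\normltom{\na v}\le\sqrt{2}\,\normltom{\dev\sym\na v}$ for every $v\in\cictnom$, and the sharpened polyhedral identity as well. Note that nowhere does a Poincar\'e-type inequality or any lower-order term enter, so the absence of the standard Poincar\'e inequality in exterior domains is irrelevant.

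Finally I would pass to the closure. Given $v$ in $\homoctnom$ (or the logarithmically weighted space for $N=2$), pick a sequence $v_k\in\cictnom$ converging to $v$ in the respective weighted norm; in particular $\na v_k\to\na v$ in $\ltom$, hence $\dev\sym\na v_k\to\dev\sym\na v$ and $\div v_k\to\div v$ in $\ltom$. Passing to the limit in the inequality $\normltom{\na v_k}\le\sqrt{2}\,\normltom{\dev\sym\na v_k}$, resp.\ in the polyhedral identity, establishes the claim for $v$; the equality discussion ($\div v=0$ or $N=2$) transfers verbatim since it is a statement about $\na v\in\ltom$.

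The only point that needs a moment's care — and the closest thing to an obstacle — is making sure the weighted norms really do dominate the plain $\ltom$-norm of the gradient, so that $\ltom$-convergence of $\na v_k$ is automatic from convergence in $\homoom$; this is immediate from the definitions of $\homoom$ and $\hgen{}{1}{-1,\ln}(\om)$, in which $\na u\in\ltom$ is built in. One should also remark that the surface-integral sign conditions in Definition~\ref{admdom}(iii) are a property of $\ga_1$ alone and are unaffected by whether $\om$ is bounded, so \cite[Theorem 3.1.1.2]{grisvardbook} applies on the compactly supported pieces exactly as before. With that observed, Corollary~\ref{maintheocor} follows.
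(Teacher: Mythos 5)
Your proposal is correct and follows exactly the route the paper intends: Remark \ref{maintheorem} justifies the corollary precisely because the argument of Theorem \ref{maintheo} involves only derivatives (no lower-order Poincar\'e term), Proposition \ref{GrisvardPro} applies to compactly supported test fields on unbounded domains, and the weighted spaces are defined as closures of the same test fields in a norm that controls $\normltom{\na v}$. You have merely written out in detail what the paper dispatches with ``from the latter remark the following clearly holds true.''
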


\begin{cor}
\mylabel{maintheocorrotdiv}
Corollary \ref{concavepolyhedroncor} extends to all $v$ in $\homoctnom$ 
resp. $\hgen{\circ}{1}{-1,\ln,\mathsf{t,n}}(\om)$ if $N=2$.
Also Lemma \ref{kornfirst} extends to all $v$ in $\homocom$ 
resp. $\hgen{\circ}{1}{-1,\ln}(\om)$ if $N=2$.
\end{cor}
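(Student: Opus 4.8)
The plan is to reduce the assertion to the compactly supported case — Corollary \ref{concavepolyhedroncor} and Lemma \ref{kornfirst} — combined with a one-line continuity argument, exactly as sketched in Remark \ref{maintheorem}. The point is that, by construction, $\homoctnom$ and (for $N=2$) $\hgen{\circ}{1}{-1,\ln,\mathsf{t,n}}(\om)$ are the closures of the very same test spaces $\cictnom$ as in the bounded case, only with respect to the weighted graph norm; likewise $\homocom$ resp. $\hgen{\circ}{1}{-1,\ln}(\om)$ are the closures of $\cicom$. Since $\homoom$ carries the norm $v\mapsto\big(\normltmoom{v}^2+\normltom{\na v}^2\big)^{\oh}$ — and analogously in the logarithmically weighted case — the weighted norm dominates $\normltom{\na v}$, so the linear maps $v\mapsto\na v$, $v\mapsto\rot v$, $v\mapsto\div v$ and $v\mapsto\dev\sym\na v$ are all bounded from the weighted space into $\ltom$.

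Given a $v$ in $\homoctnom$, I would pick a sequence $(v_k)$ in $\cictnom$ with $v_k\to v$ in the weighted norm. By the boundedness just noted, $\na v_k\to\na v$, $\rot v_k\to\rot v$ and $\div v_k\to\div v$ in $\ltom$. Applying Corollary \ref{concavepolyhedroncor} to each $v_k$ and letting $k\to\infty$ transfers the respective Gaffney inequality — or, in the polyhedral case, the equality \eqref{polyhedroncor} — to $v$. Replacing $\cictnom$ by $\cicom$ and using the identity of Lemma \ref{kornfirst} in place of Corollary \ref{concavepolyhedroncor} gives the extension of Lemma \ref{kornfirst} to $\homocom$. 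For $N=2$ the same computation is carried out verbatim with the logarithmically weighted test spaces; nothing changes, since only first derivatives of $v$ enter all the relevant (in)equalities.

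There is essentially no obstacle: the single thing to verify is that the weighted graph norm controls $\normltom{\na v}$, which is immediate from the definitions of $\homoom$ and $\hgen{}{1}{-1,\ln}(\om)$. In particular, no weighted Poincar\'e inequality is needed for this corollary; such an inequality is relevant elsewhere only to ensure that the weighted seminorm is a genuine norm on the full spaces, not for the present density/continuity reduction.
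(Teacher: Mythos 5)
Your argument is correct and is precisely the one the paper intends: the corollary is stated as an immediate consequence of Remark \ref{maintheorem}, namely that the weighted spaces are by definition closures of the same test fields, that only derivatives (all controlled by the weighted graph norm) enter the (in)equalities, and that Proposition \ref{GrisvardPro} applies to compactly supported test fields even on unbounded domains. Your added observation that no weighted Poincar\'e inequality is needed here is also accurate.
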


\section{Some Remarks on the Constants $c_{\mathsf{k}}$, $c_{\mathsf{m,n}}$ and $c_{\mathsf{g}}$}

In this section we want to discuss in detail some constants and inequalities used in 
\cite{desvillettesvillanikornnormal}.
In \cite{desvillettesvillanikornnormal} Korn's first inequality with normal boundary condition, 
i.e. \eqref{firstkornnormal}, is proved for a bounded $\co$-domain $\om\subset\rN$ 
which is not axisymmetric. As already mentioned in the introduction
an upper bound for the first Korn constant is presented by \eqref{kornconstrelation}, i.e.,
\begin{align*}
c_{\mathsf{k,n}}^2
&\leq2N(1+c_{\mathsf{m,n}}^2)(1+c_{\mathsf{k}}^2)(1+c_{\mathsf{g}}^{-1}),
\end{align*}
which we repeat here for the convenience of the reader.
All these constants depend on $\om$ and we always assume to deal with best possible ones.

\subsection{Korn Constant without Boundary Condition $c_{\mathsf{k}}$}

This constant belongs to the standard first Korn inequality without boundary conditions, i.e.,
\begin{align}
\mylabel{firstkornvillani}
\exists\,c_{\mathsf{k}}>0\qquad
\forall\,v\in\hoom\qquad
\exists\,r_{v}\in\calR\qquad
\normltom{\na(v-r_{v})}
\leq c_{\mathsf{k}}\normltom{\sym\na v},
\end{align}
where $\calR$ is the finite dimensional space of rigid motions
and $r_{v}$ the $\ltom$-orthonormal projection onto $\calR$.
Especially, \eqref{firstkornvillani} holds for any bounded Lipschitz domain $\om\subset\rN$.

\subsection{Normal Gaffney Constant $c_{\mathsf{m,n}}$}

Whereas the literature on $c_{\mathsf{k}}$ is well known, it seems that the knowledge
on the normal Gaffney constant $c_{\mathsf{m,n}}$ is more restricted to the community dealing 
with Maxwell's equations as it is explicitly noted in \cite{desvillettesvillanikornnormal}.
For this reason we examine it here in more detail.
In \cite{desvillettesvillanikornnormal} this constant appears in a special Gaffney inequality
for tangential vector fields in $\hoom$, i.e.,
there exists $c_{\mathsf{m,n}}>0$, such that 
for all $v\in\hocnom$
there exists $n_{v}\in\mathsf{V}_{\mathsf{n},0}(\om)$ with 
\begin{align}
\nonumber
\normltom{\na(v-n_{v})}
&\leq c_{\mathsf{m,n}}\big(\normltom{\skw\na v}^2+\normltom{\tr\na v}^2\big)^{\oh}
\intertext{or equivalently (with slightly different $c_{\mathsf{m,n}}$)}
\mylabel{maxwellregvillani}
\normltom{\na(v-n_{v})}
&\leq c_{\mathsf{m,n}}\big(\normltom{\rot v}^2+\normltom{\div v}^2\big)^{\oh},
\end{align}
where 
$$\mathsf{V}_{\mathsf{n},0}(\om)
:=\set{v\in\hocnom}{\div v=0\,\wedge\,\rot v=0}$$
is the finite dimensional\footnote{We remark 
$\mathsf{V}_{\mathsf{n},0}(\om)\subset\mathsf{X}_{\mathsf{n},0}(\om)$
and that even $\mathsf{X}_{\mathsf{n},0}(\om)$ is finite dimensional.} 
subspace of $\hoom$-Neumann fields
and $n_{v}$ the $\ltom$-orthonormal projection onto $\mathsf{V}_{\mathsf{n},0}(\om)$.
Inequality \eqref{maxwellregvillani} can be derived by a Maxwell regularity result,
see e.g. \cite{weberregmax,kuhnpaulyregmax}, stating the following: 
Let $$\mathsf{X}_{\mathsf{n}}(\om)
:=\set{v\in\ltom}{\rot v\in\ltom\,\wedge\,\div v\in\ltom\,\wedge\,\nu\cdot v|_{\ga}=0},$$
where the vanishing normal trace
has to be understood in the weak sense\footnote{The vanishing normal trace is 
realized by the closure of test vector fields $\cicom$
under the graph norm of $\div$ viewed as an unbounded operator acting on $\ltom$.}. 
If $\om\subset\rN$ is a bounded domain and either $\ct$ or convex, then any vector field $v$ in $\mathsf{X}_{\mathsf{n}}(\om)$
already belongs to $\hoom$, i.e., $v\in\hocnom$.
Since $\mathsf{X}_{\mathsf{n}}(\om)$ together with the norm 
$$\norm{v}_{\mathsf{X}_{\mathsf{n}}(\om)}^2
:=\normltom{v}^2+\normltom{\rot v}^2+\normltom{\div v}^2$$
is a Hilbert space, we can apply the closed graph theorem to the identity mapping $\mathsf{X}_{\mathsf{n}}(\om)$
to $\hoom$. Therefore there exists $c_{\mathsf{m,n,reg}}>0$, such that for all 
$v\in\mathsf{X}_{\mathsf{n}}(\om)\cap\hoom=\hocnom$
\begin{align}
\mylabel{maxwellregone}
\normltom{\na v}
&\leq c_{\mathsf{m,n,reg}}
\big(\normltom{v}^2+\normltom{\rot v}^2+\normltom{\div v}^2\big)^{\oh}
\end{align}
holds. Since the embedding of $\mathsf{X}_{\mathsf{n}}(\om)$ into $\ltom$ is compact
even for bounded Lipschitz (or weaker) domains $\om$,
see \cite{weckmax,webercompmax,picardcomimb,witschremmax,picardweckwitschxmas},
we also have the so-called normal Maxwell estimate, i.e.,
there exists $c_{\mathsf{m,n,est}}>0$, such that for all $v\in\mathsf{X}_{\mathsf{n}}(\om)$
there exists $n_{v}\in\mathsf{X}_{\mathsf{n},0}(\om)$ with
\begin{align}
\mylabel{maxwellest}
\normltom{v-n_{v}}
&\leq c_{\mathsf{m,n,est}}
\big(\normltom{\rot v}^2+\normltom{\div v}^2\big)^{\oh},
\end{align}
where 
$$\mathsf{X}_{\mathsf{n},0}(\om)
:=\set{v\in\mathsf{X}_{\mathsf{n}}(\om)}{\div v=0\,\wedge\,\rot v=0}$$
is the finite dimensional\footnote{We note that by the compact embedding 
$\mathsf{X}_{\mathsf{n}}(\om)\hookrightarrow\ltom$, the unit ball in $\mathsf{X}_{\mathsf{n},0}(\om)$
is compact.} 
subspace of Neumann fields
and $n_{v}$ the $\ltom$-orthonormal projection onto $\mathsf{X}_{\mathsf{n},0}(\om)$.
Now \eqref{maxwellregvillani} follows immediately by combining 
\eqref{maxwellregone} and \eqref{maxwellest} if $\om$ is bounded and either $\ct$ or convex with
$$c_{\mathsf{m,n}}\leq c_{\mathsf{m,n,reg}}\sqrt{c_{\mathsf{m,n,est}}^2+1},$$
since in this case $\mathsf{X}_{\mathsf{n}}(\om)=\hocnom$ and $\mathsf{X}_{\mathsf{n},0}(\om)=\mathsf{V}_{\mathsf{n},0}(\om)$.
In the bounded and convex case there are even no Neumann fields, i.e., $\mathsf{X}_{\mathsf{n},0}(\om)=\{0\}$,
and $c_{\mathsf{m,n}}\leq1$ holds, see e.g.
\cite{saranenineqfried,costabelcoercbilinMax,amrouchebernardidaugegiraultvectorpot,paulymaxconst0,paulymaxconst1,paulymaxconst2}
for the cases $N=2$ or $N=3$,
which follows essentially by Corollary \ref{concavepolyhedroncor} and uniform approximation 
of a convex domain $\om$ by a sequence of smooth and convex domains.
The Neumann fields generally vanish if and only if $\om$ is simply connected.
On the other hand we note that \eqref{maxwellregvillani} 
also holds in some non-smooth and non-convex situations as well.
For example, by Corollary \ref{appGrisvardCorwithoutltnorm} below we see that 
\eqref{maxwellregvillani} is valid if $\om$ is bounded and piecewise $\ct$.
Especially, for piecewise $\ct$-convex domains we have $c_{\mathsf{m,n}}\leq1$
by Corollary \ref{concavepolyhedroncor}. For polyhedra it even holds $c_{\mathsf{m,n}}=1$.
We note that in the latter piecewise $\ct$-convex case 
we can choose $n_{v}=0$ even if $\om$ is not simply connected, i.e.,
even if Neumann fields exist in $\mathsf{X}_{\mathsf{n}}(\om)$.
These possible Neumann fields must vanish by Corollary \ref{concavepolyhedroncor}
as soon as they belong to $\hocnom$.
Therefore there are domains, e.g. a polyhedron with a reentrant edge, 
where $\hocnom$ is a closed subspace of $\mathsf{X}_{\mathsf{n}}(\om)$ in the 
$\mathsf{X}_{\mathsf{n}}(\om)$-topology, 
but neither $\mathsf{X}_{\mathsf{n}}(\om)\not\subset\hocnom$
nor $\hocnom$ is dense in $\mathsf{X}_{\mathsf{n}}(\om)$.
To the best knowledge of the authors it is unknown, wether or not \eqref{maxwellregvillani} holds for
general bounded Lipschitz domains or even for general bounded $\co$-domains.

\subsection{Grad's Number $c_{\mathsf{g}}$ }

Let $\om\subset\rN$ be a bounded Lipschitz domain.
From the introduction we recall Grad's number
$$c_{\mathsf{g}}
=\frac{1}{|\om|}\inf_{|\sigma|=1}\inf_{v_{\sigma}\in\mathsf{V}_{\mathsf{n},\sigma}(\om)}\normltom{\sym\na v_{\sigma}}^2$$
and the finite dimensional set
$$\mathsf{V}_{\mathsf{n},\sigma}(\om)
=\set{v\in\hocnom}{\div v=0\,\wedge\,\rot v=\sigma},\quad
\sigma\in S^{(N-1)N/2-1}.$$
We emphasize that $\mathsf{V}_{\mathsf{n},\sigma}(\om)$ might be empty, 
if $\om$ is not smooth enough, since generally a solution of 
\begin{align}
\mylabel{Maxwellproblem}
\div v_{\sigma}=0,\,\rot v_{\sigma}=\sigma\text{ in }\om,\quad 
v_{\sigma}\cdot\nu=0\text{ on }\ga,
\end{align}
does not belong to $\hoom$. More precisely, \eqref{Maxwellproblem} admits a solution $v_{\sigma}$
$$v_{\sigma}\in\mathsf{X}_{\mathsf{n},\sigma}(\om)
:=\set{v\in\mathsf{X}_{\mathsf{n}}(\om)}{\div v=0\,\wedge\,\rot v=\sigma}$$
for any $\sigma\in S^{(N-1)N/2-1}$ since $\sigma$ belongs to the range of the rotation.
This follows by the simple fact that $\sigma=\rot\hat{\sigma}\in\rot\hoom$ 
or equivalently $\Sigma=-\skw\na\,\hat{\Sigma}\in\skw\na\hoom$ holds
with $\hat{\Sigma}(x):=\Sigma\,x$, where 
the skew-symmetric matrix $\Sigma\in\rNtN$ corresponds to 
$\sigma\in\reals^{(N-1)N/2}$ and the vector field $\hat{\Sigma}$
to the vector field $\hat{\sigma}$.
An adequate solution theory for these electro-magneto static problems
can be found in \cite{picardpotential,picardboundaryelectro,saranenelectromagnetostatic,picardlowfreqmax,picarddeco}.
In fact, $v_{\sigma}=\pi\hat{\sigma}$ is the Helmholtz projection $\pi$ of $\hat{\sigma}$ 
onto solenoidal vector fields with homogeneous normal boundary condition.
Generally, $v_{\sigma}\not\in\mathsf{V}_{\mathsf{n},\sigma}(\om)$ 
and thus $\mathsf{V}_{\mathsf{n},\sigma}(\om)=\emptyset$, i.e., $c_{\mathsf{g}}=+\infty$, 
is possible even for $\co$-domains\footnote{In \cite[Lemma 4]{desvillettesvillanikornnormal}
$v_{\sigma}$ is found by solving the Neumann problem,
$\Delta\varphi=0$ in $\om$, $\na\varphi\cdot\nu=-\hat{\Sigma}\,\nu$ on $\ga$,
and setting $v_\sigma=\na\varphi+\hat\Sigma$. 
But in order to guarantee $v_\sigma\in\hoom$ one needs to have $\varphi\in\htom$, 
which itself is only ensured if $\om$ is $\ct$ or convex. 
Moreover, as pointed out above it seems to be unclear wether
\cite[(10), (13)]{desvillettesvillanikornnormal}, i.e., \eqref{maxwellregvillani}, hold for general $\co$-domains.}.
On the other hand, if $\om$ is $\ct$ or convex, the above mentioned regularity theory for Maxwell's equations
shows $v_{\sigma}\in\mathsf{V}_{\mathsf{n},\sigma}(\om)$. Moreover, if $\om$ is convex or simply connected and $\ct$, 
there are even no Neumann fields, which implies in these cases 
the uniqueness of the solution $v_{\sigma}$ and we simply have
$$c_{\mathsf{g}}
=\frac{1}{|\om|}\inf_{|\sigma|=1}\normltom{\sym\na v_{\sigma}}^2.$$
As announced in the introduction we now show that
$c_{\mathsf{g}}(\om_{\mathsf{p}})=\oh$ holds
for any bounded and convex polyhedron $\om_{\mathsf{p}}\subset\rN$.
For every $\sigma\in S^{(N-1)N/2-1}$
problem \eqref{Maxwellproblem} has a unique solution $v_{\sigma}\in\hocn(\om_{\mathsf{p}})$, i.e.
$v_{\sigma}\in\mathsf{V}_{\mathsf{n},\sigma}(\om_{\mathsf{p}})$,
(by regularity for static Maxwell's equations in convex domains,
see e.g. \cite[Theorem 3.1]{saranenineqfried} or 
\cite[Theorem 2.17]{amrouchebernardidaugegiraultvectorpot} for the case $N=3$) with
$$\norm{\rot v_{\sigma}}_{\lt(\om_{\mathsf{p}})}^2
=\norm{\sigma}_{\lt(\om_{\mathsf{p}})}^2
=|\om_{\mathsf{p}}|.$$
On the other hand, by Corollary \ref{concavepolyhedroncor} and Theorem \ref{maintheo} we also have
$$\norm{\rot v_{\sigma}}_{\lt(\om_{\mathsf{p}})}^2
=\norm{\na v_{\sigma}}_{\lt(\om_{\mathsf{p}})}^2
=2\norm{\dev\sym\na v_{\sigma}}_{\lt(\om_{\mathsf{p}})}^2
=2\norm{\sym\na v_{\sigma}}_{\lt(\om_{\mathsf{p}})}^2$$
and hence
$$c_{\mathsf{g}}
=\frac{1}{|\om_{\mathsf{p}}|}\inf_{|\sigma|=1}\norm{\sym\na v_{\sigma}}_{\lt(\om_{\mathsf{p}})}^2
=\foh.$$

\section{Some More Estimates on the Gradient}

In this section we shall combine some more pointwise formulas and estimates on matrices and Jacobians with
the integration formula from Proposition \ref{GrisvardPro} in order to get some more equalities and estimates 
on the norm of gradients.

\subsection{Matrices}

Let us note a few simple and well known facts about matrices and Jacobians
extending the formulas presented in Section \ref{sec:prelim}.
The pointwise orthogonal sums
$$A=\dev A\oplus\id_{A},\quad
A=\sym A\oplus\skw A,\quad
\sym A=\dev\sym A\oplus\id_{A}$$
translate to the pointwise equations
$$\norm{A}^2
=\norm{\dev A}^2
+\frac{1}{N}\norm{\tr A}^2,\quad
\norm{A}^2
=\norm{\sym A}^2
+\norm{\skw A}^2,\quad
\norm{\sym A}^2
=\norm{\dev\sym A}^2
+\frac{1}{N}\norm{\tr A}^2$$
and the pointwise estimates
$$\norm{\dev A},
\frac{1}{\sqrt{N}}\norm{\tr A},
\norm{\sym A},
\norm{\skw A}
\leq\norm{A}.$$
For $A=\na v$ with $v\in\hoom$ we see pointwise a.e.
\begin{align}
\mylabel{appskewrot}
\norm{\skw\na v}^2
=\foh\norm{\rot v}^2,\quad
\tr\na v=\div v
\end{align}
and
\begin{align*}
\norm{\na v}^2
&=\norm{\sym\na v}^2
+\norm{\skw\na v}^2
=\norm{\sym\na v}^2
+\foh\norm{\rot v}^2,\\
\norm{\na v}^2
&=\norm{\dev\sym\na v}^2
+\frac{1}{N}\norm{\div v}^2
+\norm{\skw\na v}^2
=\norm{\dev\sym\na v}^2
+\frac{1}{N}\norm{\div v}^2
+\foh\norm{\rot v}^2.
\end{align*}
Especially, we see
$$\norm{\div v}^2
+\norm{\rot v}^2
\leq N\norm{\na v}^2.$$
Moreover, by
$$2\norm{\sym\!/\!\skw\na v}^2
=\foh\norm{\na v\pm(\na v)^{\top}}^2
=\norm{\na v}^2
\pm\scp{\na v}{(\na v)^{\top}}$$
we get
\begin{align}
\mylabel{appsymgrad}
\norm{\na v}^2
&=2\norm{\sym\na v}^2
-\scp{\na v}{(\na v)^{\top}},\\
\mylabel{appskewgrad}
\norm{\na v}^2
&=2\norm{\skw\na v}^2
+\scp{\na v}{(\na v)^{\top}}
=\norm{\rot v}^2
+\scp{\na v}{(\na v)^{\top}}.
\end{align}

\subsection{Integration by Parts}

Defining
$$I_{\mathsf{b}}
:=\int_{\ga_1}\big(\ncomp{v}\div_\ga\tcomp{v}
-\tcomp{v}\cdot\na_\ga \ncomp{v}\big),\quad
I_{\mathsf{c}}
:=\int_{\ga_1}\big(\div\nu\,|v_{\mathsf{n}}|^2
+((\na\nu)\,v_{\mathsf{t}})\cdot v_{\mathsf{t}}\big)$$
Proposition \ref{GrisvardPro} as well as
\eqref{appsymgrad} and \eqref{appskewgrad}, \eqref{appskewrot}
show:

\begin{lem}[integration by parts]
\mylabel{appGrisvardLem}
Let $\om\subset\rN$ be piecewise $\ct$. Then for all $v\in\cicomb$
\begin{align*}
\normltom{\na v}^2
-I_{\mathsf{b}}
-I_{\mathsf{c}}
&=2\normltom{\sym\na v}^2
-\normltom{\div v}^2
=2\normltom{\dev\sym\na v}^2
+\frac{2-N}{N}\normltom{\div v}^2,\\
\normltom{\na v}^2
+I_{\mathsf{b}}
+I_{\mathsf{c}}
&=2\normltom{\skw\na v}^2
+\normltom{\div v}^2
=\normltom{\rot v}^2
+\normltom{\div v}^2,
\end{align*}
which extend by continuity to all $v\in\htom$.
For $v\in\cictnom$ the integral $I_{\mathsf{b}}$ containing the boundary differential operators vanishes
and the formulas (without $I_{\mathsf{b}}$) extend by continuity to all $v\in\hoctnom$.
\end{lem}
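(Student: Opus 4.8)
The plan is to read the assertion off Proposition \ref{GrisvardPro} by combining it with the elementary pointwise identities recorded above. First I would rewrite Proposition \ref{GrisvardPro}: using the rearrangement from Remark \ref{GrisvardProRem}, its first formula states, for every $v\in\cicomb$,
\[
\normltom{\div v}^2-\scpltom{\na v}{(\na v)^{\top}}=I_{\mathsf{b}}+I_{\mathsf{c}},
\]
and its second formula states that for $v\in\cictnom$ the boundary differential term $I_{\mathsf{b}}$ is absent, i.e. $\normltom{\div v}^2-\scpltom{\na v}{(\na v)^{\top}}=I_{\mathsf{c}}$; indeed, for such $v$ the tangential part $\tcomp v$ vanishes on $\gat$, the normal part $\ncomp v$ vanishes on $\gan$, and $\ga_0$ carries no surface measure, so the integrand of $I_{\mathsf{b}}$ vanishes almost everywhere on $\ga_1$.

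Next I would integrate the pointwise identities \eqref{appsymgrad} and \eqref{appskewgrad} over $\om$, which yield $\normltom{\na v}^2=2\normltom{\sym\na v}^2-\scpltom{\na v}{(\na v)^{\top}}$ and $\normltom{\na v}^2=2\normltom{\skw\na v}^2+\scpltom{\na v}{(\na v)^{\top}}$, and then eliminate the cross term $\scpltom{\na v}{(\na v)^{\top}}$ with the displayed identity from the previous step. This gives at once
\[
\normltom{\na v}^2-I_{\mathsf{b}}-I_{\mathsf{c}}=2\normltom{\sym\na v}^2-\normltom{\div v}^2,\qquad\normltom{\na v}^2+I_{\mathsf{b}}+I_{\mathsf{c}}=2\normltom{\skw\na v}^2+\normltom{\div v}^2.
\]
The two alternative right-hand sides claimed in the lemma then follow by integrating the pointwise orthogonal splitting $\norm{\sym\na v}^2=\norm{\dev\sym\na v}^2+\frac{1}{N}\norm{\div v}^2$ (using $\tr\na v=\div v$) and, respectively, by \eqref{appskewrot}, that is $\norm{\skw\na v}^2=\foh\norm{\rot v}^2$. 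For $v\in\cictnom$ the identical computation with $I_{\mathsf{b}}=0$ produces the formulas without $I_{\mathsf{b}}$.

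It remains to pass to the announced closures, and this is the one place calling for a little care. The six quadratic forms $v\mapsto\normltom{\na v}^2$, $\normltom{\sym\na v}^2$, $\normltom{\skw\na v}^2$, $\normltom{\dev\sym\na v}^2$, $\normltom{\div v}^2$, $\normltom{\rot v}^2$ are continuous already on $\hoom$, and so is $I_{\mathsf{c}}$, since it involves only the trace $v|_{\ga}\in L^2(\ga)$ and the bounded coefficients $\div\nu$, $\na\nu$ on $\ga_1$ furnished by the $\ct$-regularity. The functional $I_{\mathsf{b}}$, in contrast, sees first boundary derivatives of $v$; I would argue that it is continuous on $\htom$ via the trace theorem $\htom\to H^{3/2}(\ga)$, the mapping property $\div_\ga,\na_\ga\colon H^{3/2}(\ga_1)\to H^{1/2}(\ga_1)$ (here the $\ct$-regularity of $\ga_1$ enters again), and Sobolev multiplication of the resulting factors into $L^1(\ga_1)$. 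Since $\cicomb$ is dense in $\htom$ and, by definition, $\cictnom$ is dense in $\hoctnom$, the first pair of identities extends by continuity to all $v\in\htom$, while the second pair---which contains no $I_{\mathsf{b}}$---extends all the way to $v\in\hoctnom\subset\hoom$. The closest thing to an obstacle is precisely this asymmetry of the admissible function spaces: it is forced by the presence of the surface differential operators in $I_{\mathsf{b}}$ and their absence from $I_{\mathsf{c}}$, and I would make sure to state it explicitly rather than claim a uniform $\htom$-extension.
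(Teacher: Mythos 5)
Your proposal is correct and follows exactly the route the paper intends: the lemma is stated in the paper as an immediate consequence of Proposition \ref{GrisvardPro} combined with the pointwise identities \eqref{appsymgrad}, \eqref{appskewgrad} and \eqref{appskewrot}, eliminating the cross term $\scpltom{\na v}{(\na v)^{\top}}$, which is precisely your computation. Your additional remarks on the density/continuity step (in particular why $I_{\mathsf{b}}$ forces the $\htom$-topology while the $I_{\mathsf{b}}$-free formulas extend to $\hoctnom$) only make explicit what the paper leaves implicit.
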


For $N=2,3$ these results have already been presented in 
\cite[Lemma 2.2, Theorem 2.3, Remark 2.4]{costabeldaugemaxwelllameeigenvaluespolyhedra}.

\subsection{Gradient Estimates}

By Lemma \ref{appGrisvardLem} we get for all $v\in\hoctnom$
\begin{align*}
I_{\mathsf{c}}
=\begin{cases}
\diss\normltom{\na v}^2
-2\normltom{\dev\sym\na v}^2
-\frac{2-N}{N}\normltom{\div v}^2\\
\normltom{\rot v}^2
+\normltom{\div v}^2
-\normltom{\na v}^2
\end{cases},\quad
I_{\mathsf{c}}
\leq c\int_{\ga_1}|v|^2,
\end{align*}
where $c>0$ just depends on the derivatives of $\nu$ and $\ga_1$.
In combination with \cite[Theorem 1.5.1.10]{grisvardbook} we obtain:

\begin{cor}
\mylabel{appGrisvardCor}
Let $\om\subset\rN$ be piecewise $\ct$. Then there exists $c>0$,
such that for all $v\in\hoctnom$ and for all $\eps>0$
\begin{align*}
(1-\eps)\normltom{\na v}^2
&\leq2\normltom{\dev\sym\na v}^2
+\frac{2-N}{N}\normltom{\div v}^2
+\frac{c}{\eps}\normltom{v}^2,\\
(1-\eps)\normltom{\na v}^2
&\leq\normltom{\rot v}^2
+\normltom{\div v}^2
+\frac{c}{\eps}\normltom{v}^2.
\end{align*}
\end{cor}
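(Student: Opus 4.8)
The plan is to start from the two identities in Lemma~\ref{appGrisvardLem}, written for $v\in\cictnom$ where the differential-operator integral $I_{\mathsf{b}}$ vanishes, so that
\begin{align*}
\normltom{\na v}^2
-I_{\mathsf{c}}
&=2\normltom{\dev\sym\na v}^2
+\frac{2-N}{N}\normltom{\div v}^2,\\
\normltom{\na v}^2
+I_{\mathsf{c}}
&=\normltom{\rot v}^2
+\normltom{\div v}^2.
\end{align*}
First I would recall, as already noted just before the statement, that $I_{\mathsf{c}}$ is a surface integral whose integrand is bounded pointwise by a constant times $|v|^2$, the constant depending only on $\div\nu$ and $\na\nu$ on $\ga_1$; hence $|I_{\mathsf{c}}|\leq c_1\int_{\ga_1}|v|^2\leq c_1\normltga{v}^2$.

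The key step is to control the boundary $L^2$-norm of $v$ by an interpolation (trace) inequality of the form $\int_{\ga_1}|v|^2\leq\eps\normltom{\na v}^2+\tfrac{c_2}{\eps}\normltom{v}^2$, valid for all $v\in\hoom$ and all $\eps>0$ on a strongly Lipschitz bounded domain; this is exactly \cite[Theorem~1.5.1.10]{grisvardbook}, applied componentwise to the vector field $v$. Combining this with the bound on $I_{\mathsf{c}}$ gives $|I_{\mathsf{c}}|\leq c_1\eps'\normltom{\na v}^2+\tfrac{c_1c_2}{\eps'}\normltom{v}^2$; after renaming $\eps:=c_1\eps'$ and $c:=c_1^2 c_2$ this reads $|I_{\mathsf{c}}|\leq\eps\normltom{\na v}^2+\tfrac{c}{\eps}\normltom{v}^2$. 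Substituting into each of the two displayed identities and moving the $\eps\normltom{\na v}^2$ term to the left yields the two asserted inequalities for $v\in\cictnom$.

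Finally I would pass from test vector fields to all of $\hoctnom$ by continuity: each term appearing in the two inequalities ($\normltom{\na v}$, $\normltom{\dev\sym\na v}$, $\normltom{\div v}$, $\normltom{\rot v}$, $\normltom{v}$) is continuous with respect to the $\hoom$-norm, and $\cictnom$ is by definition dense in $\hoctnom$, so the inequalities survive the limit. The only real obstacle is making sure the trace/interpolation inequality \cite[Theorem~1.5.1.10]{grisvardbook} is available in the present generality — it requires only that $\ga$ be strongly Lipschitz, which is part of Definition~\ref{admdom}(i), so nothing beyond ``piecewise $\ct$'' is needed; the rest is bookkeeping with the constant $c$ and with $\eps$.
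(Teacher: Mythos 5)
Your proposal is correct and follows essentially the same route as the paper: it reads off the two identities from Lemma \ref{appGrisvardLem} (with $I_{\mathsf{b}}=0$ for the boundary-condition class), bounds $|I_{\mathsf{c}}|$ pointwise by a constant times $\int_{\ga_1}|v|^2$, and absorbs the boundary term via the trace interpolation inequality of \cite[Theorem 1.5.1.10]{grisvardbook}, finishing by density of $\cictnom$ in $\hoctnom$. The only refinement worth noting is that you correctly work with $|I_{\mathsf{c}}|$ rather than the one-sided bound, which is what is actually needed since the two identities use $I_{\mathsf{c}}$ with opposite signs.
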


The latter lemma and corollary clearly show that Korn's inequalities and the Maxwell gradient estimate
\eqref{maxwellregvillani} share the same origin. 
We can also get rid of the $\ltom$-norm of $v$ on the right hand sides.
Let us focus on the second inequality and assume that 
$\om$ is a bounded and piecewise $\ct$-domain. We introduce
$$\mathsf{V}_{\mathsf{t,n},0}(\om):=\set{v\in\hoctnom}{\div v=0\,\wedge\,\rot v=0},$$
which is a finite dimensional (and hence closed) subspace of $\ltom$
since its unit ball is compact by Corollary \ref{appGrisvardCor} and Rellich's selection theorem.

\begin{cor}[Gaffney's inequality]
\mylabel{appGrisvardCorwithoutltnorm}
Let $\om\subset\rN$ be a bounded and piecewise $\ct$-domain. Then there exists $c>0$,
such that for all $v\in\hoctnom$ there exists $n_{v}\in\mathsf{V}_{\mathsf{t,n},0}(\om)$ with
$$\norm{v-n_{v}}_{\hoom}
\leq c\big(\normltom{\rot v}+\normltom{\div v}\big)$$
and $n_{v}$ is the $\ltom$-orthonormal projection of $v$ onto $\mathsf{V}_{\mathsf{t,n},0}(\om)$.
\end{cor}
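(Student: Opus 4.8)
The plan is to derive Corollary \ref{appGrisvardCorwithoutltnorm} from the second inequality in Corollary \ref{appGrisvardCor} by a standard compactness/quotient argument, using that the offending $\ltom$-norm of $v$ on the right-hand side can be absorbed once we factor out the finite-dimensional kernel $\mathsf{V}_{\mathsf{t,n},0}(\om)$. First I would record that $\mathsf{V}_{\mathsf{t,n},0}(\om)$ is indeed finite dimensional (hence closed in $\ltom$ and in $\hoom$): by Corollary \ref{appGrisvardCor} with, say, $\eps=\tfrac12$, every $v$ in this space satisfies $\normltom{\na v}^2\leq 2c\normltom{v}^2$, so the $\ltom$- and $\hoom$-norms are equivalent on it, and its unit ball is $\ltom$-bounded and hence, by Rellich's selection theorem (the embedding $\hoom\hookrightarrow\ltom$ is compact for bounded $\om$), $\ltom$-precompact; a precompact unit ball forces finite dimension. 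Then I let $n_{v}$ denote the $\ltom$-orthonormal projection of $v$ onto $\mathsf{V}_{\mathsf{t,n},0}(\om)$ and set $w:=v-n_{v}\in\hoctnom$, noting $\rot w=\rot v$, $\div w=\div v$, and $w\perp_{\ltom}\mathsf{V}_{\mathsf{t,n},0}(\om)$.

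The core step is the Poincaré-type estimate
\begin{align}
\mylabel{appPoincareVtn}
\exists\,c_{\mathsf{p}}>0\qquad\forall\,w\in\hoctnom\ \text{with}\ w\perp_{\ltom}\mathsf{V}_{\mathsf{t,n},0}(\om)\qquad
\normltom{w}\leq c_{\mathsf{p}}\big(\normltom{\rot w}+\normltom{\div w}\big).
\end{align}
I would prove \eqref{appPoincareVtn} by contradiction: if it fails, there is a sequence $(w_{k})$ in $\hoctnom$, each orthogonal to $\mathsf{V}_{\mathsf{t,n},0}(\om)$, with $\normltom{w_{k}}=1$ but $\normltom{\rot w_{k}}+\normltom{\div w_{k}}\to 0$. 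By Corollary \ref{appGrisvardCor} the sequence $(w_{k})$ is bounded in $\hoom$, so after passing to a subsequence $w_{k}\rightharpoonup w$ weakly in $\hoom$ and, by Rellich, $w_{k}\to w$ strongly in $\ltom$; hence $\normltom{w}=1$ and $w$ is still orthogonal to $\mathsf{V}_{\mathsf{t,n},0}(\om)$. Weak closedness of $\rot$ and $\div$ gives $\rot w=0$ and $\div w=0$, and $\hoctnom$ is closed in $\hoom$ (it is defined as a closure), so $w\in\mathsf{V}_{\mathsf{t,n},0}(\om)$; combined with $w\perp\mathsf{V}_{\mathsf{t,n},0}(\om)$ this yields $w=0$, contradicting $\normltom{w}=1$. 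Once \eqref{appPoincareVtn} is in hand, I insert it into the second inequality of Corollary \ref{appGrisvardCor} applied to $w$, choosing $\eps=\tfrac12$:
\begin{align*}
\tfrac12\normltom{\na w}^2
&\leq\normltom{\rot w}^2+\normltom{\div w}^2+2c\normltom{w}^2
\leq\big(1+2c\,c_{\mathsf{p}}^2\big)\big(\normltom{\rot w}+\normltom{\div w}\big)^2,
\end{align*}
and together with \eqref{appPoincareVtn} bounding $\normltom{w}$ itself, adding the two estimates produces $\norm{w}_{\hoom}\leq c\big(\normltom{\rot v}+\normltom{\div v}\big)$ with a new constant $c$, which is exactly the claim since $w=v-n_{v}$.

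The main obstacle is the contradiction argument for \eqref{appPoincareVtn}, and specifically the need to know that $\hoctnom$ is weakly closed in $\hoom$ and that $\rot,\div$ pass to weak $\ho$-limits — the latter is immediate since these are bounded linear operators from $\hoom$ to $\ltom$, and the former holds because $\hoctnom$ is a closed subspace of $\hoom$ and closed convex sets are weakly closed. The only subtlety worth flagging is that all of this rests on the compactness of $\hoom\hookrightarrow\ltom$, i.e. on the boundedness of $\om$, which is why Corollary \ref{appGrisvardCorwithoutltnorm} is stated for bounded domains, in contrast to the derivative-only results of Section 3. Everything else is routine: the absorption of the $\eps\normltom{\na v}^2$ term was already carried out in Corollary \ref{appGrisvardCor}, and the passage from $v$ to the projected field $w$ only uses that $\rot$ and $\div$ annihilate $\mathsf{V}_{\mathsf{t,n},0}(\om)$.
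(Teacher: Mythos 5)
Your proof is correct and follows essentially the same route as the paper's: a compactness/contradiction argument based on Corollary \ref{appGrisvardCor} and Rellich's selection theorem, after reducing to $w=v-n_{v}$ in the orthogonal complement of the finite-dimensional space $\mathsf{V}_{\mathsf{t,n},0}(\om)$. The only cosmetic difference is that you normalize in $\ltom$ and pass through an intermediate Poincar\'e-type estimate using weak $\hoom$-convergence, whereas the paper normalizes in $\hoom$ and gets strong $\hoom$-convergence directly by applying Corollary \ref{appGrisvardCor} to differences of the subsequence.
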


\begin{proof}
Since $v-n_{v}\in\hoctnom\cap\mathsf{V}_{\mathsf{t,n},0}(\om)^{\bot}$
as well as $\rot(v-n_{v})=\rot v$ and $\div(v-n_{v})=\div v$ 
it is sufficient to show
\begin{align}
\mylabel{appGrisvardCorwithoutltnormineqone}
\exists\,c&>0&
\forall\,v&\in\hoctnom\cap\mathsf{V}_{\mathsf{t,n},0}(\om)^{\bot}&
\norm{v}_{\hoom}
&\leq c\big(\normltom{\rot v}+\normltom{\div v}\big).
\end{align}
If \eqref{appGrisvardCorwithoutltnormineqone} is wrong, there exists a sequence 
$(v_{n})\subset\hoctnom\cap\mathsf{V}_{\mathsf{t,n},0}(\om)^{\bot}$ with
$$\norm{v_{n}}_{\hoom}=1,\quad
\normltom{\rot v_{n}}+\normltom{\div v_{n}}\to0.$$
As $(v_{n})$ is bounded in $\hoom$ there exists a subsequence $(v_{\pi n})$
converging to some $v$ in $\ltom$ by Rellich's selection theorem. 
By Corollary \ref{appGrisvardCor}, $(v_{\pi n})$ is a Cauchy sequence in $\hoom$ and thus 
$$v_{\pi n}\to v\in\hoctnom\cap\mathsf{V}_{\mathsf{t,n},0}(\om)^{\bot}\qtext{in}\hoom.$$
Since $v$ belongs to $\mathsf{V}_{\mathsf{t,n},0}(\om)$ as well, we have $v=0$
in contradiction to $1=\norm{v_{n}}_{\hoom}\to\norm{v}_{\hoom}$, which proves \eqref{appGrisvardCorwithoutltnormineqone}.
\end{proof}

\begin{rem}
\mylabel{appGrisvardCorwithoutltnormrem}
As in Remark \ref{maintheorem} and Corollaries \ref{maintheocor} and \ref{maintheocorrotdiv},
there are also versions of Corollary \ref{appGrisvardCorwithoutltnorm} for the case
of e.g. a piecewise $\ct$ exterior domain using polynomially weighted Sobolev spaces.
\end{rem}

%%%%%%%%%%%%%%%%%%%%%%%%%%%%%%%%%%%%%%%%%%%%%%%%%%%%%%%%%%%%%%%%%%%%%%%%%%%%%%%%

%%%%%%%%%%%%%%%%
% bibliography %
%%%%%%%%%%%%%%%%

\bibliographystyle{plain} 
\ifthenelse{\equal{\person}{paule}}
{\bibliography{paule,Litbank}}
{\bibliography{/Users/sebastianbauer/Dropbox/Uni/paper/paule,/Users/sebastianbauer/Dropbox/Uni/paper/Litbank}}

\begin{thebibliography}{10}

\bibitem{amrouchebernardidaugegiraultvectorpot}
C.~Amrouche, C.~Bernardi, M.~Dauge, and V.~Girault.
\newblock Vector potentials in three-dimensional non-smooth domains.
\newblock {\em Math. Methods Appl. Sci.}, 21(9):823--864, 1998.

\bibitem{costabelcoercbilinMax}
M.~Costabel.
\newblock A coercive bilinear form for {M}axwell's equations.
\newblock {\em J. Math. Anal. Appl.}, 157(2):527--541, 1991.

\bibitem{costabeldaugemaxwelllameeigenvaluespolyhedra}
M.~Costabel and M.~Dauge.
\newblock Maxwell and {L}am\'e eigenvalues on polyhedra.
\newblock {\em Math. Methods Appl. Sci.}, 22(3):243--258, 1999.

\bibitem{costabeldaugenicaisesingularitiesmaxwellinterface}
M.~Costabel, M.~Dauge, and S.~Nicaise.
\newblock Singularities of {M}axwell interface problems.
\newblock {\em Math. Model. Numer. Anal.}, 33(3):627--649, 1999.

\bibitem{desvillettesvillanikornnormal}
L.~Desvillettes and C.~Villani.
\newblock On a variant of {K}orn's inequality arising in statistical mechanics.
  {A} tribute to {J}. {L}. {L}ions.
\newblock {\em ESAIM Control Optim. Calc. Var.}, 8:603--619, 2002.

\bibitem{desvillettesvillanitrendglequiboltzmann}
L.~Desvillettes and C.~Villani.
\newblock On the trend to global equilibrium for spatially inhomogeneous
  kinetic systems: the {B}oltzmann equation.
\newblock {\em Invent. Math.}, 159(2):245--316, 2005.

\bibitem{grisvardbook}
P.~Grisvard.
\newblock {\em Elliptic Problems in Nonsmooth Domains}.
\newblock Pitman (Advanced Publishing Program), Boston, 1985.

\bibitem{kuhnpaulyregmax}
P.~Kuhn and D.~Pauly.
\newblock Regularity results for generalized electro-magnetic problems.
\newblock {\em Analysis (Munich)}, 30(3):225--252, 2010.

\bibitem{leisbook}
R.~Leis.
\newblock {\em Initial Boundary Value Problems in Mathematical Physics}.
\newblock Teubner, Stuttgart, 1986.

\bibitem{paulymaxconst0}
D.~Pauly.
\newblock On constants in {M}axwell inequalities for bounded and convex
  domains.
\newblock {\em Zapiski POMI{\rm, 435:46-54, 2014}, \& J. Math. Sci. (N.Y.)},
  2014.

\bibitem{paulymaxconst2}
D.~Pauly.
\newblock On the {M}axwell constants in 3{D}.
\newblock {\em Math. Methods Appl. Sci.}, 2014.

\bibitem{paulymaxconst1}
D.~Pauly.
\newblock On {M}axwell's and {P}oincar\'e's constants.
\newblock {\em Discrete Contin. Dyn. Syst. Ser. S}, 8(3):607--618, 2015.

\bibitem{paulyrepinell}
D.~Pauly and S.~Repin.
\newblock Functional a posteriori error estimates for elliptic problems in
  exterior domains.
\newblock {\em J. Math. Sci. (N.Y.)}, 162(3):393--406, 2009.

\bibitem{picardpotential}
R.~Picard.
\newblock {R}andwertaufgaben der verallgemeinerten {P}otentialtheorie.
\newblock {\em Math. Methods Appl. Sci.}, 3:218--228, 1981.

\bibitem{picardboundaryelectro}
R.~Picard.
\newblock On the boundary value problems of electro- and magnetostatics.
\newblock {\em Proc. Roy. Soc. Edinburgh Sect. A}, 92:165--174, 1982.

\bibitem{picardcomimb}
R.~Picard.
\newblock An elementary proof for a compact imbedding result in generalized
  electromagnetic theory.
\newblock {\em Math. Z.}, 187:151--164, 1984.

\bibitem{picardlowfreqmax}
R.~Picard.
\newblock On the low frequency asymptotics in and electromagnetic theory.
\newblock {\em J. Reine Angew. Math.}, 354:50--73, 1984.

\bibitem{picarddeco}
R.~Picard.
\newblock Some decomposition theorems and their applications to non-linear
  potential theory and {H}odge theory.
\newblock {\em Math. Methods Appl. Sci.}, 12:35--53, 1990.

\bibitem{picardweckwitschxmas}
R.~Picard, N.~Weck, and K.-J. Witsch.
\newblock Time-harmonic {M}axwell equations in the exterior of perfectly
  conducting, irregular obstacles.
\newblock {\em Analysis (Munich)}, 21:231--263, 2001.

\bibitem{saranenineqfried}
J.~Saranen.
\newblock On an inequality of {F}riedrichs.
\newblock {\em Math. Scand.}, 51(2):310--322, 1982.

\bibitem{saranenelectromagnetostatic}
J.~Saranen.
\newblock On electric and magnetic problems for vector fields in anisotropic
  nonhomogeneous media.
\newblock {\em J. Math. Anal. Appl.}, 91(1):254--275, 1983.

\bibitem{saranenwitschexteriorell}
J.~Saranen and K.-J. Witsch.
\newblock Exterior boundary value problems for elliptic equations.
\newblock {\em Ann. Acad. Sci. Fenn. Math.}, 8(1):3--42, 1983.

\bibitem{webercompmax}
C.~Weber.
\newblock A local compactness theorem for {M}axwell's equations.
\newblock {\em Math. Methods Appl. Sci.}, 2:12--25, 1980.

\bibitem{weberregmax}
C.~Weber.
\newblock Regularity theorems for {M}axwell's equations.
\newblock {\em Math. Methods Appl. Sci.}, 3:523--536, 1981.

\bibitem{weckmax}
N.~Weck.
\newblock {M}axwell's boundary value problems on {R}iemannian manifolds with
  nonsmooth boundaries.
\newblock {\em J. Math. Anal. Appl.}, 46:410--437, 1974.

\bibitem{witschremmax}
K.-J. Witsch.
\newblock A remark on a compactness result in electromagnetic theory.
\newblock {\em Math. Methods Appl. Sci.}, 16:123--129, 1993.

\end{thebibliography}

%%%%%%%%%%%%%%%%%%%%%%%%%%%%%%%%%%%%%%%%%%%%%%%%%%%%%%%%%%%%%%%%%%%%%%%%%%%%%%%%
\end{document}